\documentclass[12pt,a4paper]{article}
\usepackage{latexsym, amsmath, amsfonts, amscd, amssymb, verbatim}
\usepackage{hyperref}

\setlength{\textwidth}{6.5in}%
\setlength{\textheight}{10.2in}%
\setlength{\topmargin}{-0.9in}%
\setlength{\oddsidemargin}{-0.1in}%

\def\N{I\!\!N}
\def\R{I\!\!R}

\def\mJ{\mathcal{J}}

\def\op{\bar{p}}

\def\ou{\bar{u}}\def\wu{\widetilde{u}}\def\hu{\widehat{u}}

\def\wy{\widetilde{y}}\def\hy{\widehat{y}}

\def\oB{\bar{B}}

\def\conv{{\rm conv}}

\def\st{|\;}\def\bst{\big|\;}

\let\saveae\ae
\renewcommand{\ae}{\text{\rm\saveae}}

\parskip=4pt

\begin{document}

\newtheorem{Theorem}{Theorem}[section]
\newtheorem{Proposition}{Proposition}[section]
\newtheorem{Remark}{Remark}[section]
\newtheorem{Lemma}{Lemma}[section]
\newtheorem{Corollary}{Corollary}[section]
\newtheorem{Definition}{Definition}[section]
\newtheorem{Example}{Example}[section]
\newtheorem{CounterExample}{Counter-Example}[section]
\renewcommand{\theequation}{\thesection.\arabic{equation}}
\normalsize

\title{\bf Stability for Bang-Bang Control Problems\\ of Partial Differential Equations\footnote{This research is
supported by the Alexander von Humboldt Foundation.
The second author was partially supported by DFG under grant number Wa 3626/1-1.
}}

\author{Nguyen Thanh Qui\footnote{College of Information and Communication Technology,
        Can Tho University, Campus II, 3/2 Street, Can Tho, Vietnam;
        ntqui@cit.ctu.edu.vn. Current address (01.07.2016-30.06.2018): Institut f\"{u}r Mathematik,
        Universit\"{a}t W\"{u}rzburg, Emil-Fischer-Str.~30, 97074 W\"{u}rzburg, Germany;
        thanhqui.nguyen@mathematik.uni-wuerzburg.de.}\quad and\quad
        Daniel Wachsmuth\footnote{Institut f\"{u}r Mathematik, Universit\"{a}t W\"{u}rzburg,
        Emil-Fischer-Str.~30, 97074 W\"{u}rzburg, Germany;
        daniel.wachsmuth@mathematik.uni-wuerzburg.de.}}

\maketitle
\date{}

\noindent {\bf Abstract.} In this paper, we investigate solution
stability for control problems of partial differential equations
with the cost functional not involving the usual quadratic term for
the control. We first establish a sufficient optimality condition
for the optimal control problems with bang-bang controls. Then we
obtain criteria for solution stability for the optimal control
problems of bang-bang controls under linear perturbations. We prove
H\"older stability of optimal controls in $L^1$.

\medskip
\noindent {\bf Key words.} Control problem, bang-bang control,
semilinear partial differential equation, perturbed control problem,
linear perturbation, solution stability.

\medskip
\noindent {\bf AMS subject classifications.}\,\ 49K20, 49K30, 35J61.%

\section{Introduction}

The aim of this paper is to study solution stability for bang-bang
optimal control problems of elliptic partial differential equations
(PDEs) under linear perturbations. In particular, we are interested
in the optimal control problems where the cost functional does not
involve the control in an explicit form as follows
\begin{equation}\label{OptConPro}
\begin{cases}
    {\rm Minimize}\quad J(u)=\displaystyle\int_\Omega L\big(x,y_u(x)\big)dx\\
    {\rm subject\ to}\quad\alpha(x)\leq u(x)\leq\beta(x)\quad\mbox{for a.e.}\ x\in\Omega,
\end{cases}
\end{equation}
where $y_u$ is the weak solution of the Dirichlet problem
\begin{equation}\label{StateEq}
\begin{cases}
\begin{aligned}
    Ay+f(x,y)&=u\ &&\mbox{in}\ \Omega\\
            y&=0  &&\mbox{on}\ \Gamma.
\end{aligned}
\end{cases}
\end{equation}
In general, local solutions $\bar u$ of this problem have the so-called bang-bang property: for
almost all $x\in \Omega$ it holds $\bar u(x)\in \{\alpha(x),\beta(x)\}$.

Motivated by the second-order sufficient optimality conditions for
bang-bang optimal control problem obtained in \cite{Cas12SICON},
\cite{CasDWchGWch17} and the results on numerical methods obtained
in \cite{PonWch16OPTIM}, \cite{PonWch17}, we
investigate the perturbed optimal control problem
\begin{equation}
\begin{cases}
    {\rm Minimize}\quad \mJ(u,e)=J(u+e_y)+(e_J,y_{u+e_y})_{L^2(\Omega)}\\
    {\rm subject\ to}\quad u\in\mathcal{U}_{ad},
\end{cases}
\end{equation}
where $y_{u+e_y}$ is the weak solution of the
perturbed Dirichlet problem
\begin{equation}
\begin{cases}
\begin{aligned}
    Ay+f(x,y)&=u+e_y\ &&\mbox{in}\ \Omega\\
            y&=0      &&\mbox{on}\ \Gamma.
\end{aligned}
\end{cases}
\end{equation}
Here, $e=(e_y,e_J)$ is a given perturbation. We will show that the
perturbed problem has local solutions near solutions of the original
problem that satisfy a second-order condition, see
Theorem~\ref{ThmHldrEstm}. Under an additional assumption on the
bang-bang control, we prove local H\"older stability in
$L^1(\Omega)$ of optimal controls with respect to the perturbations,
see Theorem~\ref{ThmStabKKT}.

Stability results for optimal control problems can be found, for
instance, in \cite{MalaTrol00CC,MorNgia14SIOPT}. However, these
results are not applicable in our situation, as they require second-order
growth of the cost functional in $L^2(\Omega)$, which is not
fulfilled in bang-bang control problems. Instead, our analysis
relies on a second-order condition due to Casas \cite{Cas12SICON},
which substantially weakens the second-order condition while still
implying local stability, see, e.g., Theorem~\ref{ThmSSC} below.

In addition, we use an assumption that controls the growth of the adjoint state near jumps of the control.
This condition was used recently in  \cite{PonWch17} to obtain regularization error estimates of Tikhonov regularization
of the bang-bang problem.

Quite a number of stability results are available for optimal
control problems with bang-bang controls subject to ordinary differential equations, see,
e.g., \cite{Fel03SICON,Fel09CC}. The stability is based on
assumptions on the switching function, which imply our condition
\textbf{(A4.$\ae$)}. In addition, second-order conditions on
switching times are imposed. While the methods of proof are not
directly transferable, stability of controls with respect to $L^1$-norms is obtained, as
in our case.

\section{Preliminaries}

Let us assume that $\Omega\subset\R^N$ with $N\in\{1,2,3\}$ and
$\alpha,\beta\in L^\infty(\Omega)$ with $\alpha(x)\leq\beta(x)$ for
a.e. $x\in\Omega$. Moreover, the functions $L,f:\Omega\times\R\to\R$
are Carath\'eodory functions of class $\mathcal{C}^2$ with respect
to the second variable satisfying the following assumptions.

\textbf{(A1)} The function $f(\cdot,0)\in L^{\op}(\Omega)$ with
$\op>N/2$,
$$\dfrac{\partial f}{\partial y}(x,y)\geq0\quad\mbox{for a.e.}\ x\in\Omega,$$
and for all $M>0$ there exists a constant $C_{f,M}>0$ such that
$$\left|\dfrac{\partial f}{\partial y}(x,y)\right|+\left|\dfrac{\partial^2f}{\partial y^2}(x,y)\right|\leq C_{f,M}
  \quad\mbox{for a.e.}\ x\in\Omega\ \mbox{and}\ |y|\leq M.$$
For every $M>0$ and $\varepsilon>0$ there exists $\delta>0$,
depending on $M$ and $\varepsilon$ such that
$$\left|\dfrac{\partial^2f}{\partial y^2}(x,y_2)-\dfrac{\partial^2f}{\partial y^2}(x,y_1)\right|<\varepsilon
  \quad\mbox{if}\ |y_1|,|y_2|\leq M,|y_2-y_1|\leq\delta,\ \mbox{and for a.e.}\ x\in\Omega.$$

\textbf{(A2)} The function $L(\cdot,0)\in L^1(\Omega)$ and for all
$M>0$ there are a constant $C_{L,M}>0$ and a function $\psi_M\in
L^{\op}(\Omega)$ such that for every $|y|\leq M$ and almost all
$x\in\Omega$,
$$\left|\dfrac{\partial L}{\partial y}(x,y)\right|\leq\psi_M(x),\quad
  \left|\dfrac{\partial^2L}{\partial y^2}(x,y)\right|\leq C_{L,M}.$$
For every $M>0$ and $\varepsilon>0$ there exists $\delta>0$,
depending on $M$ and $\varepsilon$ such that
$$\left|\dfrac{\partial^2L}{\partial y^2}(x,y_2)-\dfrac{\partial^2L}{\partial y^2}(x,y_1)\right|<\varepsilon
  \quad\mbox{if}\ |y_1|,|y_2|\leq M,|y_2-y_1|\leq\delta,\ \mbox{and for a.e.}\ x\in\Omega.$$

\textbf{(A3)} $\Omega$ is an open and bounded domain in $\R^N$ with
Lipschitz boundary $\Gamma$, and $A$ denotes a second-order
differential elliptic operator of the form
$$Ay(x)=-\sum_{i,j=1}^N\partial_{x_j}\big(a_{ij}(x)\partial_{x_i}y(x)\big);$$
the coefficients $a_{ij}\in C(\bar\Omega)$ satisfy
$$\lambda_A|\xi|^2\leq\sum_{i,j=1}^Na_{ij}(x)\xi_i\xi_j,\ \forall\xi\in\R^N,\ \mbox{for a.e.}\ x\in\Omega,$$
for some $\lambda_A>0$.

We denote
$$\mathcal{U}_{ad}=\big\{u\in L^\infty(\Omega)\bst\alpha(x)\leq u(x)\leq\beta(x)\ \mbox{for a.e.}\ x\in\Omega\big\}.$$
Observe that $\mathcal{U}_{ad}$ is nonempty, closed, bounded, and
convex in $L^p(\Omega)$ whenever $1\leq p\leq\infty$.

We refer the reader to \cite[Chapter~4]{Trolt10B} for the proofs of
the following results on the solution of the state equation
\eqref{StateEq}. For every $u\in L^p(\Omega)$ with $p>N/2$, equation
\eqref{StateEq} has a unique weak solution $y_u\in H^1_0(\Omega)\cap
C(\bar\Omega)$. In addition, there exists a constant
$M_{\alpha,\beta}$ such that
\begin{equation}\label{EstSolEqSt}
    \|y_u\|_{H^1_0(\Omega)}+\|y_u\|_{C(\bar\Omega)}\leq M_{\alpha,\beta},\ \forall u\in\mathcal{U}_{ad}.
\end{equation}
The control-to-state mapping $G:L^2(\Omega)\to H^1_0(\Omega)\cap
C(\bar\Omega)$ defined by $G(u)=y_u$ is of class $\mathcal{C}^2$.
Moreover, for every $v\in L^2(\Omega)$, $z_{u,v}=G'(u)v$ is the
unique weak solution of
\begin{equation}\label{EqSolZuv}
\begin{cases}
  \begin{aligned}
     Az+\frac{\partial f}{\partial y}(x,y)z&=v\ &&\mbox{in}\ \Omega\\
                                          z&=0  &&\mbox{on}\ \Gamma,
  \end{aligned}
\end{cases}
\end{equation}
and for any $v_1,v_2\in L^2(\Omega)$, $w_{v_1,v_2}=G''(u)(v_1,v_2)$
is the unique weak solution of
\begin{equation}\label{EqSolSeGvv}
\begin{cases}
  \begin{aligned}
     Aw+\frac{\partial f}{\partial y}(x,y)w+\frac{\partial^2f}{\partial y^2}(x,y)z_{u,v_1}z_{u,v_2}
      &=0\ &&\mbox{in}\ \Omega\\
     w&=0  &&\mbox{on}\ \Gamma,
  \end{aligned}
\end{cases}
\end{equation}
where $y=G(u)$ and $z_{u,v_i}=G'(u)v_i$ for $i=1,2$.

By assumption \textbf{(A2)}, using the latter results and applying
the chain rule we deduce that the cost functional
$J:L^2(\Omega)\to\R$ is of class $\mathcal{C}^2$, and the first and
second derivatives of $J(\cdot)$ are given by
\begin{equation}\label{FDeriCost}
    J'(u)v=\int_\Omega\varphi_u(x)v(x)dx,
\end{equation}
and
\begin{equation}\label{SDeriCost}
    J''(u)(v_1,v_2)=\int_\Omega\left(\dfrac{\partial^2L}{\partial y^2}\big(x,y_u(x)\big)-\varphi_u(x)
    \dfrac{\partial^2f}{\partial y^2}\big(x,y_u(x)\big)\right)z_{u,v_1}(x)z_{u,v_2}(x)dx,
\end{equation}
where $z_{u,v_i}=G'(u)v_i$ for $i=1,2$, and $\varphi_u\in
H^1_0(\Omega)\cap C(\bar\Omega)$ is the adjoint state of $y_u$
defined as the unique  weak solution of
\begin{equation}\label{AdjStaEq}
\begin{cases}
\begin{aligned}
    A^*\varphi+\dfrac{\partial f}{\partial y}(x,y_u)\varphi
                     &=\dfrac{\partial L}{\partial y}(x,y_u)\ &&\mbox{in}\ \Omega\\
             \varphi &=0                                      &&\mbox{on}\ \Gamma,
\end{aligned}
\end{cases}
\end{equation}
where $A^*$ is the adjoint operator of $A$.

For any $p\in[1,\infty]$, we denote $\oB^p_\varepsilon(\ou)$ the
closed ball in the space $L^p(\Omega)$ with the center at $\ou\in
L^p(\Omega)$ and the radius $\varepsilon>0$, i.e.,
$$\oB^p_\varepsilon(\ou)=\{v\in L^p(\Omega)\bst\|v-\ou\|_{L^p(\Omega)}\leq\varepsilon\}.$$
An element $\ou\in\mathcal{U}_{ad}$ is said to be a
\emph{solution}/\emph{global minimum} of problem~\eqref{OptConPro}
if $J(\ou)\leq J(u)$ for all $u\in\mathcal{U}_{ad}$. We will say
that $\ou$ is a \emph{local solution}/\emph{local minimum} of
problem~\eqref{OptConPro} in the sense of $L^p(\Omega)$ if there
exists a closed ball $\oB^p_\varepsilon(\ou)$ such that $J(\ou)\leq
J(u)$ for all $u\in\mathcal{U}_{ad}\cap\oB^p_\varepsilon(\ou)$. The
local solution $\ou$ is called \emph{strict} if $J(\ou)<J(u)$ holds
for all $u\in\mathcal{U}_{ad}\cap\oB^p_\varepsilon(\ou)$ with
$u\neq\ou$.

Under the above assumptions, solutions of problem~\eqref{OptConPro}
exist. We introduce the space $Y=H^1_0(\Omega)\cap C(\bar\Omega)$
endowed with the norm
$$\|y\|_Y=\|y\|_{H^1_0(\Omega)}+\|y\|_{L^\infty(\Omega)}.$$
We know that, see e.g. \cite[Chapter~4]{Trolt10B}, if $\ou$ is a
local solution of problem~\eqref{OptConPro} in the sense of
$L^p(\Omega)$, then there exist a unique state $y_{\ou}\in Y$ and a
unique adjoint state $\varphi_{\ou}\in Y$ satisfying the first-order
optimality system
\begin{equation}\label{StateEqSol}
\begin{cases}
\begin{aligned}
    Ay_{\ou}+f(x,y_{\ou})&=\ou\ &&\mbox{in}\ \Omega\\
                  y_{\ou}&=0    &&\mbox{on}\ \Gamma,
\end{aligned}
\end{cases}
\end{equation}
\begin{equation}\label{AdjEq}
\begin{cases}
\begin{aligned}
    A^*\varphi_{\ou}+\dfrac{\partial f}{\partial y}(x,y_{\ou})\varphi_{\ou}
                 &=\dfrac{\partial L}{\partial y}(x,y_{\ou})\ &&\mbox{in}\ \Omega\\
    \varphi_{\ou}&=0                                          &&\mbox{on}\ \Gamma,
\end{aligned}
\end{cases}
\end{equation}
\begin{equation}\label{VarIneq}
    \int_\Omega\varphi_{\ou}(x)\big(u(x)-\ou(x)\big)dx\geq0,\ \forall u\in\mathcal{U}_{ad}.
\end{equation}

\section{Optimality conditions for bang-bang controls}

Let $\ou$ be locally optimal for problem~\eqref{OptConPro} in the
sense of $L^p(\Omega)$ with $p\in[1,\infty]$. From \eqref{VarIneq},
we deduce that
\begin{equation}\label{ValOuVarp}
\ou(x)=\begin{cases}
            \alpha(x),  &\mbox{if}\ \varphi_{\ou}(x)>0\\
            \beta(x),   &\mbox{if}\ \varphi_{\ou}(x)<0
\end{cases}
\quad\mbox{and}\quad
\varphi_{\ou}(x)\begin{cases}
            \geq0,   &\mbox{if}\ \ou(x)=\alpha(x)\\
            \leq0,   &\mbox{if}\ \ou(x)=\beta(x)\\
            =0,      &\mbox{if}\ \alpha(x)<\ou(x)<\beta(x).
\end{cases}
\end{equation}
Let us consider the case where the set
$\{x\in\Omega\st\varphi_{\ou}(x)=0\}$ has a zero Lebesgue measure.
Then, it follows from \eqref{ValOuVarp} that
$\ou(x)\in\{\alpha(x),\beta(x)\}$ for a.e. $x\in\Omega$, i.e., $\ou$
is a bang-bang control.

The goal of this section is to provide sufficient optimality
conditions for local optimality of a bang-bang control
$\ou\in\mathcal{U}_{ad}$ satisfying the first-order optimality
system \eqref{StateEqSol}-\eqref{VarIneq}. The sufficient optimality
conditions are established via the second-order derivative of the
cost functional $J(\cdot)$. For this reason, a cone of critical
directions is given. Let us first consider the natural cone of
critical directions associated with $\ou$ defined by
\begin{equation}\label{CriCone}
C_{\ou}=\left\{v\in L^2(\Omega)\Bigg\st v(x)
\begin{cases}
    \geq0\quad\mbox{if}\ \ou(x)=\alpha(x)\\
    \leq0\quad\mbox{if}\ \ou(x)=\beta(x)\\
       =0\quad\mbox{if}\ \varphi_{\ou}(x)\neq0
\end{cases}\right\}.
\end{equation}
Then, the second-order necessary conditions for local optimality can
be written in the form
\begin{equation}\label{SeOrNeCnd}
    J''(\ou)v^2\geq0,\ \forall v\in C_{\ou},
\end{equation}
see, e.g., \cite[Section~6.3]{BonSha00B}. However, it follows from
\eqref{ValOuVarp} and \eqref{CriCone} that if $\ou$ is a bang-bang
control, then $C_{\ou}=\{0\}$. Therefore,
condition~\eqref{SeOrNeCnd} is trivial and it does not provide any
information. To overcome this drawback, following \cite{Cas12SICON}
we increase the cone $C_{\ou}$ to an extended cone given as follows.
For every $\tau\geq0$, we define
\begin{equation}\label{ExCriConce}
C^\tau_{\ou}=\left\{v\in L^2(\Omega)\Bigg\st v(x)
\begin{cases}
    \geq0\quad\mbox{if}\ \ou(x)=\alpha(x)\\
    \leq0\quad\mbox{if}\ \ou(x)=\beta(x)\\
       =0\quad\mbox{if}\ |\varphi_{\ou}(x)|>\tau
\end{cases}\right\}.
\end{equation}
It is clear that $C^0_{\ou}=C_{\ou}$, and $C^\tau_{\ou}$ is bigger
than $C_{\ou}$ in general for $\tau>0$.

In this paper, in order to consider bang-bang controls $\ou$ of
problem~\eqref{OptConPro} we are interested in the case where the
set $\{x\in\Omega\st\varphi_{\ou}(x)=0\}$ has a zero Lebesgue
measure. As a consequence, the following assumption posed on the
adjoint state $\varphi_{\ou}$ is natural; see \cite{CasDWchGWch17}.

\textbf{(A4)} Assume that $\ou\in\mathcal{U}_{ad}$ and it satisfies
the first-order optimality system
\eqref{StateEqSol}-\eqref{VarIneq}, and the condition below
\begin{equation}\label{AsAdVphi}
    \exists K>0\ \mbox{such that}\ \big|\{x\in\Omega:|\varphi_{\ou}(x)|\leq\varepsilon\}\big|\leq
    K\varepsilon,\ \forall\varepsilon>0.
\end{equation}
In \eqref{AsAdVphi}, we denote $|\cdot|$ the Lebesgue measure.

\begin{Proposition}{\rm(See \cite[Proposition~2.7]{CasDWchGWch17})}\label{PropFCdCWW}
Assume that {\rm\textbf{(A1)}-\textbf{(A4)}} hold. Then, there
exists $\kappa>0$ such that
\begin{equation}
    J'(\ou)(u-\ou)\geq\kappa\|u-\ou\|^2_{L^1(\Omega)},\ \forall u\in\mathcal{U}_{ad}.
\end{equation}
\end{Proposition}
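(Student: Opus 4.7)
The plan is to exploit the bang-bang structure dictated by the first-order optimality condition in order to rewrite $J'(\ou)(u-\ou)$ as an $L^1$-type integral weighted by $|\varphi_{\ou}|$, and then to use the growth condition \textbf{(A4)} on the adjoint state to trade off an arbitrary threshold $\varepsilon$ against the $L^1$-norm of $u-\ou$.

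First, I would observe that, by \eqref{FDeriCost}, $J'(\ou)(u-\ou) = \int_\Omega \varphi_{\ou}(x)(u(x)-\ou(x))dx$. The characterization \eqref{ValOuVarp} shows that on $\{\varphi_{\ou}>0\}$ we have $\ou = \alpha$, hence $u-\ou \geq 0$, and on $\{\varphi_{\ou}<0\}$ we have $\ou = \beta$, hence $u-\ou \leq 0$. Combined with \textbf{(A4)}, which implies $|\{\varphi_{\ou}=0\}|=0$, this yields the pointwise identity
\begin{equation*}
    \varphi_{\ou}(x)(u(x)-\ou(x)) = |\varphi_{\ou}(x)|\,|u(x)-\ou(x)|\quad\text{for a.e. }x\in\Omega,
\end{equation*}
so that $J'(\ou)(u-\ou) = \int_\Omega |\varphi_{\ou}|\,|u-\ou|\,dx$.

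Next, for an arbitrary $\varepsilon>0$ I would split the domain into $\Omega_\varepsilon^+=\{|\varphi_{\ou}|>\varepsilon\}$ and $\Omega_\varepsilon^-=\{|\varphi_{\ou}|\leq\varepsilon\}$. On $\Omega_\varepsilon^+$ we get the trivial lower bound $|\varphi_{\ou}|\geq \varepsilon$, while on $\Omega_\varepsilon^-$ we use $|u-\ou|\leq \beta-\alpha \leq M$ with $M:=\|\beta-\alpha\|_{L^\infty(\Omega)}$ together with \textbf{(A4)} to control $\int_{\Omega_\varepsilon^-}|u-\ou|\,dx \leq MK\varepsilon$. Hence
\begin{equation*}
    J'(\ou)(u-\ou) \;\geq\; \varepsilon\int_{\Omega_\varepsilon^+}|u-\ou|\,dx \;\geq\; \varepsilon\Big(\|u-\ou\|_{L^1(\Omega)} - MK\varepsilon\Big).
\end{equation*}

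Finally, I would optimize in $\varepsilon$. The natural choice $\varepsilon = \|u-\ou\|_{L^1(\Omega)}/(2MK)$ makes the bracket equal to $\tfrac12 \|u-\ou\|_{L^1(\Omega)}$ and yields $J'(\ou)(u-\ou) \geq \|u-\ou\|_{L^1(\Omega)}^2/(4MK)$, so $\kappa = 1/(4MK)$ works (the case $u=\ou$ is trivial). No real obstacle arises; the only point that needs a moment's care is verifying that the bang-bang identity for $\varphi_{\ou}(u-\ou)$ holds a.e., which is exactly where \textbf{(A4)} enters to rule out the troublesome set $\{\varphi_{\ou}=0\}$, and that $\|\beta-\alpha\|_{L^\infty(\Omega)}$ is finite by the standing hypothesis $\alpha,\beta\in L^\infty(\Omega)$.
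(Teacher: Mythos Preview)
Your proof is correct and essentially identical to the paper's argument (given for the more general Proposition~\ref{PropFstCd}, specialized here to $\ae=1$): the same level-set splitting $\{|\varphi_{\ou}|>\varepsilon\}$ versus $\{|\varphi_{\ou}|\leq\varepsilon\}$, the same choice $\varepsilon=\|u-\ou\|_{L^1(\Omega)}/(2MK)$, and the same resulting constant $\kappa=1/(4MK)$. One minor remark: the pointwise identity $\varphi_{\ou}(u-\ou)=|\varphi_{\ou}|\,|u-\ou|$ in fact holds everywhere without invoking \textbf{(A4)}, since on $\{\varphi_{\ou}=0\}$ both sides vanish; \textbf{(A4)} is used only for the measure bound $|\Omega_\varepsilon^-|\leq K\varepsilon$.
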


We are going to extend the result given in
Proposition~\ref{PropFCdCWW}, where \textbf{(A4)} is replace with
assumption \textbf{(A4.$\ae$)} stated below.

\textbf{(A4.$\ae$)} Assume that $\ou\in\mathcal{U}_{ad}$ and it
satisfies the first-order optimality system
\eqref{StateEqSol}-\eqref{VarIneq}, and the following condition
\begin{equation}\label{AsmAdVrphi}
    \exists K>0,\exists\ae>0\ \mbox{such that}\ \big|\{x\in\Omega:|\varphi_{\ou}(x)|\leq\varepsilon\}\big|
    \leq K\varepsilon^{\ae},\ \forall\varepsilon>0.
\end{equation}

\begin{Proposition}\label{PropFstCd}
Assume that {\rm\textbf{(A1)}-\textbf{(A3)}} and
{\rm\textbf{(A4.$\ae$)}} hold. Then, there exists $\kappa>0$ such
that
\begin{equation}\label{FOrdL1}
    J'(\ou)(u-\ou)\geq\kappa\|u-\ou\|^{1+\frac{1}{\ae}}_{L^1(\Omega)},\ \forall u\in\mathcal{U}_{ad}.
\end{equation}
\end{Proposition}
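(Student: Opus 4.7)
My plan is to first rewrite $J'(\ou)(u-\ou)$ as an integral of the absolute values, then split the integration domain using $|\varphi_{\ou}|$ against a threshold $\varepsilon$, use assumption \textbf{(A4.$\ae$)} to bound the ``bad'' set, and finally optimize over $\varepsilon$ to recover the exponent $1+1/\ae$.

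\emph{Step 1 (pointwise identity).} From the KKT relations in \eqref{ValOuVarp}, for a.e.\ $x\in\Omega$ and any $u\in\mathcal{U}_{ad}$ one has $\varphi_{\ou}(x)\bigl(u(x)-\ou(x)\bigr)\ge 0$, and in fact this product equals $|\varphi_{\ou}(x)|\,|u(x)-\ou(x)|$: indeed, where $\varphi_{\ou}(x)>0$ we have $\ou(x)=\alpha(x)$ so $u(x)-\ou(x)\ge 0$; where $\varphi_{\ou}(x)<0$ we have $\ou(x)=\beta(x)$ so $u(x)-\ou(x)\le 0$; and where $\varphi_{\ou}(x)=0$ both sides vanish. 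Combined with \eqref{FDeriCost}, this gives
\[
    J'(\ou)(u-\ou)=\int_\Omega|\varphi_{\ou}(x)|\,|u(x)-\ou(x)|\,dx.
\]

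\emph{Step 2 (splitting and use of \textbf{(A4.$\ae$)}).} Fix $\varepsilon>0$ and set $\Omega_\varepsilon:=\{x\in\Omega:|\varphi_{\ou}(x)|\le\varepsilon\}$. Since $u,\ou\in\mathcal{U}_{ad}$, we have $|u-\ou|\le\|\beta-\alpha\|_{L^\infty(\Omega)}$ pointwise, and by \eqref{AsmAdVrphi} $|\Omega_\varepsilon|\le K\varepsilon^{\ae}$. Therefore
\[
    \int_{\Omega_\varepsilon}|u-\ou|\,dx\le M\varepsilon^{\ae},\qquad M:=K\,\|\beta-\alpha\|_{L^\infty(\Omega)}.
\]
On $\Omega\setminus\Omega_\varepsilon$, we use $|\varphi_{\ou}|>\varepsilon$ to estimate
\[
    J'(\ou)(u-\ou)\ge\varepsilon\int_{\Omega\setminus\Omega_\varepsilon}|u-\ou|\,dx
    \ge\varepsilon\bigl(\|u-\ou\|_{L^1(\Omega)}-M\varepsilon^{\ae}\bigr)
    =\varepsilon\,\|u-\ou\|_{L^1(\Omega)}-M\varepsilon^{1+\ae}.
\]

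\emph{Step 3 (optimization in $\varepsilon$).} If $u=\ou$ the claim is trivial. Otherwise, choose
\[
    \varepsilon=\left(\frac{\|u-\ou\|_{L^1(\Omega)}}{M(1+\ae)}\right)^{1/\ae},
\]
which maximizes the right-hand side as a function of $\varepsilon>0$. Substituting and simplifying yields
\[
    J'(\ou)(u-\ou)\ge \kappa\,\|u-\ou\|_{L^1(\Omega)}^{\,1+1/\ae},\qquad
    \kappa:=\frac{\ae}{1+\ae}\,\bigl(M(1+\ae)\bigr)^{-1/\ae},
\]
proving \eqref{FOrdL1}.

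\emph{Main obstacle.} There is no real technical obstacle here: once the pointwise identity of Step~1 is in hand, the argument is a single-threshold truncation tuned to \textbf{(A4.$\ae$)}. The only point requiring care is choosing the right $\varepsilon$ so that the ``mass on the thin set $\Omega_\varepsilon$'' (of order $\varepsilon^{1+\ae}$) is dominated by a constant fraction of the ``mass on the thick set'' (of order $\varepsilon\,\|u-\ou\|_{L^1}$), which is precisely what produces the exponent $1+1/\ae$ and recovers Proposition~\ref{PropFCdCWW} in the case $\ae=1$.
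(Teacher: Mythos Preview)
Your proof is correct and follows essentially the same route as the paper: both arguments use the pointwise identity $\varphi_{\ou}(u-\ou)=|\varphi_{\ou}|\,|u-\ou|$, split the domain at the threshold $\{|\varphi_{\ou}|\le\varepsilon\}$, bound the contribution on the thin set via \textbf{(A4.$\ae$)} and the $L^\infty$-bound $|u-\ou|\le\|\beta-\alpha\|_{L^\infty(\Omega)}$, and then choose $\varepsilon$ proportional to $\|u-\ou\|_{L^1(\Omega)}^{1/\ae}$. The only cosmetic difference is that the paper fixes $\varepsilon$ so that the bad-set mass equals exactly half of $\|u-\ou\|_{L^1(\Omega)}$, whereas you optimize over $\varepsilon$ and obtain a slightly sharper constant $\kappa$.
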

\begin{proof}
Given $u\in\mathcal{U}_{ad}$, we put
$$\varepsilon:=\Big((2\|\beta-\alpha\|_{L^\infty(\Omega)}K)^{-1}\|u-\ou\|_{L^1(\Omega)}\Big)^{1/\ae}
  \quad\mbox{and}\quad E_\varepsilon:=\{x\in\Omega:|\varphi_{\ou}|\geq\varepsilon^{\ae}\}.$$
Then, we have
$$\begin{aligned}
    J'(\ou)(u-\ou)&=\int_\Omega|\varphi_{\ou}||u-\ou|dx\geq\int_{E_\varepsilon}|\varphi_{\ou}||u-\ou|dx\\
                  &\geq\varepsilon\|u-\ou\|_{L^1(E_\varepsilon)}
                   =\varepsilon\big(\|u-\ou\|_{L^1(\Omega)}-\|u-\ou\|_{L^1(\Omega\setminus E_\varepsilon)}\big).
\end{aligned}$$
Since $|\Omega\setminus E_\varepsilon|\leq K\varepsilon^{\ae}$ by
\eqref{AsmAdVrphi}, we obtain
$$\|u-\ou\|_{L^1(\Omega\setminus E_\varepsilon)}\leq\|\beta-\alpha\|_{L^\infty(\Omega)}K\varepsilon^{\ae}.$$
Consequently, we deduce that
$$\begin{aligned}
    J'(\ou)(u-\ou)&\geq\varepsilon\big(\|u-\ou\|_{L^1(\Omega)}-\|u-\ou\|_{L^1(\Omega\setminus E_\varepsilon)}\big)\\
                  &\geq\varepsilon\big(\|u-\ou\|_{L^1(\Omega)}-\|\beta-\alpha\|_{L^\infty(\Omega)}
                   K\varepsilon^{\ae}\big)\\
                  &\geq\frac{\varepsilon}{2}\|u-\ou\|_{L^1(\Omega)}=\kappa\|u-\ou\|^{1+\frac{1}{\ae}}_{L^1(\Omega)},
\end{aligned}$$
where
$\kappa=2^{-1}\big(2\|\beta-\alpha\|_{L^\infty(\Omega)}K\big)^{\frac{1}{\ae}}.$
$\hfill\Box$
\end{proof}

\medskip
The following theorem provides us with a second-order sufficient
optimality condition for the bang-bang control
problem~\eqref{OptConPro}.

\begin{Theorem}\label{ThmSSC}
Assume that $\ou$ is a feasible control for problem
\eqref{OptConPro} satisfying {\rm\textbf{(A1)}-\textbf{(A3)}} and
{\rm\textbf{(A4.$\ae$)}} and assume that there exist $\delta>0$ and
$\tau>0$ such that
\begin{equation}\label{SOrdCd}
    J''(\ou)v^2\geq\delta\|z_v\|^2_{L^2(\Omega)},\ \forall v\in C^\tau_{\ou},
\end{equation}
where $z_v=G'(\ou)v$ is the solution of \eqref{EqSolZuv} for
$y=y_{\ou}$. Then, there exists $\varepsilon>0$ such that
\begin{equation}\label{GrOpCd}
    J(\ou)+\frac{\kappa}{2}\|u-\ou\|^{1+\frac{1}{\ae}}_{L^1(\Omega)}+\frac{\delta}{8}\|z_{u-\ou}\|^2_{L^2(\Omega)}
    \leq J(u),\ \forall u\in\mathcal{U}_{ad}\cap\oB^2_\varepsilon(\ou),
\end{equation}
with $z_{u-\ou}=G'(\ou)(u-\ou)$ and $\kappa$ being given in
Proposition~\ref{PropFstCd}.
\end{Theorem}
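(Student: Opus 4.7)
The plan is to prove \eqref{GrOpCd} by a direct lower bound on $J(u)-J(\ou)$ via Taylor's expansion, with the two target terms $\tfrac{\kappa}{2}\|u-\ou\|_{L^1}^{1+1/\ae}$ and $\tfrac{\delta}{8}\|z_{u-\ou}\|_{L^2}^2$ extracted separately from the first- and second-order contributions. For $u\in\mathcal{U}_{ad}\cap\oB^2_\varepsilon(\ou)$ and $v:=u-\ou$, I would write
\[
J(u)-J(\ou) = J'(\ou)v+\tfrac12 J''(\ou+\theta v)v^2
\]
for some $\theta=\theta(u)\in(0,1)$ and lower-bound each piece in turn.

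For the first-order term I would combine two complementary estimates on $J'(\ou)v=\int_\Omega\varphi_{\ou}v\,dx$. Proposition~\ref{PropFstCd} yields $J'(\ou)v\geq\kappa\|v\|_{L^1(\Omega)}^{1+1/\ae}$; on the other hand, the sign structure in \eqref{ValOuVarp} makes the integrand $\varphi_{\ou}(x)v(x)$ pointwise nonnegative for every $u\in\mathcal{U}_{ad}$, so restricting to $\{|\varphi_{\ou}|>\tau\}$ gives $J'(\ou)v\geq\tau\|\tilde v^\tau\|_{L^1(\Omega)}$, where $\tilde v^\tau:=v\chi_{\{|\varphi_{\ou}|>\tau\}}$. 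Since $J'(\ou)v$ exceeds both bounds, it also exceeds their arithmetic mean, so
\[
J'(\ou)v\geq \tfrac{\kappa}{2}\|v\|_{L^1(\Omega)}^{1+1/\ae}+\tfrac{\tau}{2}\|\tilde v^\tau\|_{L^1(\Omega)}.
\]

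For the second-order term I would decompose $v=v^\tau+\tilde v^\tau$ with $v^\tau:=v\chi_{\{|\varphi_{\ou}|\leq\tau\}}$. By admissibility of $u$, the sign conditions in \eqref{ExCriConce} hold automatically for $v$, and the cutoff enforces the vanishing condition, so $v^\tau\in C^\tau_{\ou}$ and \eqref{SOrdCd} applies to $v^\tau$. Expanding $J''(\ou)v^2$, dominating the cross and pure-$\tilde v^\tau$ contributions via Cauchy--Schwarz and Young on the bilinear form, and using the identity $z_{v^\tau}=z_v-z_{\tilde v^\tau}$, I would obtain $J''(\ou)v^2\geq\tfrac{\delta}{2}\|z_v\|_{L^2(\Omega)}^2-C_1\|z_{\tilde v^\tau}\|_{L^2(\Omega)}^2$ for a fixed $C_1$. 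A short continuity argument based on (A1)--(A3) together with the resolvent estimate $\|z_{u^\theta,v}-z_v\|_{L^2}\leq C\|\partial_y f(y_{u^\theta})-\partial_y f(y_{\ou})\|_{L^\infty}\|z_v\|_{L^2}$ then gives $J''(\ou+\theta v)v^2\geq J''(\ou)v^2-\omega(\varepsilon)\|z_v\|_{L^2(\Omega)}^2$ uniformly in $\theta$, with $\omega(\varepsilon)\downarrow 0$ as $\varepsilon\downarrow 0$.

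The main obstacle is absorbing the residual $C_1\|z_{\tilde v^\tau}\|_{L^2}^2$ into the available first-order gain $\tfrac{\tau}{2}\|\tilde v^\tau\|_{L^1}$, since a naive estimate only yields $\|z_{\tilde v^\tau}\|_{L^2}^2\leq C\|\tilde v^\tau\|_{L^1}$. I would exploit the hypothesis $N\leq 3$: the continuous embedding $L^{2N/(N+2)}(\Omega)\hookrightarrow H^{-1}(\Omega)$, combined with $H^1_0$-regularity for \eqref{EqSolZuv} and interpolation against the uniform bound $\|\tilde v^\tau\|_{L^\infty}\leq\|\beta-\alpha\|_{L^\infty}$, produces a \emph{superlinear} estimate $\|z_{\tilde v^\tau}\|_{L^2(\Omega)}^2\leq C\|\tilde v^\tau\|_{L^1(\Omega)}^{1+\sigma}$ for some $\sigma>0$. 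Since $\|\tilde v^\tau\|_{L^1}\leq|\Omega|^{1/2}\varepsilon\to 0$, for $\varepsilon$ small the residual is dominated by $\tfrac{\tau}{4}\|\tilde v^\tau\|_{L^1}$; shrinking $\varepsilon$ further so that $\omega(\varepsilon)\leq\delta/4$ and collecting the estimates delivers \eqref{GrOpCd}.
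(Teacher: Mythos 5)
Your proposal is correct and, in its skeleton, is the same argument the paper gives: a second-order Taylor expansion at $\ou$, the splitting of $u-\ou$ into the part supported on $\{|\varphi_{\ou}|\le\tau\}$ (which lies in $C^\tau_{\ou}$) and its complement $w$ (your $\widetilde v^{\tau}$), the use of one half of the first-order term for Proposition~\ref{PropFstCd} and the other half for the gain $\tfrac{\tau}{2}\|w\|_{L^1(\Omega)}$ via \eqref{ValOuVarp}, the second-order condition \eqref{SOrdCd} on the cone part, uniform continuity of $J''$ (the paper's Lemma~\ref{LemCas27}), and absorption of the off-cone quadratic contributions into the first-order gain. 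The one place you genuinely diverge is the absorption step, which you flag as the main obstacle and resolve by a Sobolev-embedding/interpolation argument yielding $\|z_{w}\|^2_{L^2(\Omega)}\le C\|w\|^{1+\sigma}_{L^1(\Omega)}$ with $\sigma>0$ coming from $N\le 3$. This is valid, but it is an unnecessary detour: the estimate actually available (Lemma~\ref{LemCas26}, i.e.\ boundedness of $G'(\ou)$ from $L^1(\Omega)$ to $L^2(\Omega)$) is $\|z_w\|_{L^2(\Omega)}\le C_3\|w\|_{L^1(\Omega)}$, whose square is already superlinear in $\|w\|_{L^1(\Omega)}$; combined with $\|w\|_{L^1(\Omega)}\le\sqrt{|\Omega|}\,\varepsilon$ it gives $\|z_w\|^2_{L^2(\Omega)}\le C_3^2\sqrt{|\Omega|}\,\varepsilon\,\|w\|_{L^1(\Omega)}$, and the paper absorbs the residual exactly this way through the smallness condition \eqref{ChseVarep} on $\varepsilon$, with no appeal to the dimension at this point. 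Your ``naive estimate'' $\|z_w\|^2_{L^2(\Omega)}\le C\|w\|_{L^1(\Omega)}$ is weaker than what Lemma~\ref{LemCas26} provides, so the obstacle you describe does not actually arise. With either absorption mechanism the constants close to give exactly \eqref{GrOpCd}.
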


In order to prove Theorem~\ref{ThmSSC} we need the following
technical lemmas.

\begin{Lemma}\label{LemCas26}{\rm(See \cite[Lemma~2.6]{Cas12SICON})}
For any $u\in\mathcal{U}_{ad}$ and $v\in L^2(\Omega)$, denote
$z_{u,v}=G'(u)v$. Also, we set $z_v=G'(\ou)v$. Then there exist
constants $C_2>0$ and $C_3>0$ such that
$$\|z_{u,v}-z_v\|_Y\leq C_2\|u-\ou\|_{L^2(\Omega)}\|z_v\|_{L^2(\Omega)},\ \forall v\in L^2(\Omega),$$
$$\|z_{u,v}\|_{L^2(\Omega)}\leq C_3\|v\|_{L^1(\Omega)},\ \forall v\in L^1(\Omega),$$
for all $u\in\mathcal{U}_{ad}$.
\end{Lemma}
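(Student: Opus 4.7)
The plan is to prove the two estimates separately, both relying on standard elliptic regularity applied to the linearized state equation \eqref{EqSolZuv} together with the uniform $L^\infty$-bound $\|y_u\|_{C(\bar\Omega)}\leq M_{\alpha,\beta}$ from \eqref{EstSolEqSt} and the coefficient bounds supplied by \textbf{(A1)}.

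For the first estimate, I would subtract the defining equations: $z_{u,v}$ satisfies $Az_{u,v}+\tfrac{\partial f}{\partial y}(x,y_u)z_{u,v}=v$, while $z_v$ satisfies the analogous equation with $y_u$ replaced by $y_{\ou}$. Setting $w=z_{u,v}-z_v$ gives
\[
Aw+\frac{\partial f}{\partial y}(x,y_u)w=\Big[\frac{\partial f}{\partial y}(x,y_{\ou})-\frac{\partial f}{\partial y}(x,y_u)\Big]z_v
\]
with zero boundary trace. The pointwise bound on $\partial^2f/\partial y^2$ from \textbf{(A1)} (valid for $|y|\leq M_{\alpha,\beta}$) supplies a Lipschitz estimate $|\tfrac{\partial f}{\partial y}(x,y_{\ou})-\tfrac{\partial f}{\partial y}(x,y_u)|\leq C_{f,M}|y_{\ou}-y_u|$. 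Combined with Lipschitz continuity of $G:L^2(\Omega)\to Y$ on the bounded set $\mathcal{U}_{ad}$ (which follows from $G\in\mathcal{C}^2$ and the mean-value theorem), this yields $\|y_u-y_{\ou}\|_{L^\infty(\Omega)}\leq L\|u-\ou\|_{L^2(\Omega)}$, so the $L^2$-norm of the right-hand side above is bounded by $C\|u-\ou\|_{L^2(\Omega)}\|z_v\|_{L^2(\Omega)}$. Standard $H^1\cap L^\infty$-regularity for the operator $A+\tfrac{\partial f}{\partial y}(x,y_u)\cdot\,$ (whose zeroth-order coefficient is nonnegative and uniformly bounded over $u\in\mathcal{U}_{ad}$), applicable because $N\leq 3$ ensures $L^2(\Omega)\hookrightarrow L^q(\Omega)$ for some $q>N/2$, then delivers $\|w\|_Y\leq C\|\text{r.h.s.}\|_{L^2(\Omega)}$, producing the constant $C_2$.

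For the second estimate, I would use a duality argument. Given arbitrary $g\in L^2(\Omega)$, let $\eta\in Y$ be the weak solution of the adjoint problem $A^*\eta+\tfrac{\partial f}{\partial y}(x,y_u)\eta=g$ with homogeneous Dirichlet data. The same $L^\infty$-regularity as above gives $\|\eta\|_{L^\infty(\Omega)}\leq C\|g\|_{L^2(\Omega)}$, with the constant uniform in $u\in\mathcal{U}_{ad}$. Testing the equation for $z_{u,v}$ against $\eta$ (or equivalently the adjoint equation against $z_{u,v}$) produces $\int_\Omega z_{u,v}\,g\,dx=\int_\Omega v\,\eta\,dx\leq\|v\|_{L^1(\Omega)}\|\eta\|_{L^\infty(\Omega)}\leq C\|v\|_{L^1(\Omega)}\|g\|_{L^2(\Omega)}$. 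Taking the supremum over $g$ with $\|g\|_{L^2(\Omega)}\leq 1$ gives $\|z_{u,v}\|_{L^2(\Omega)}\leq C_3\|v\|_{L^1(\Omega)}$.

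The main technical obstacle is ensuring that the elliptic regularity constants are uniform in $u\in\mathcal{U}_{ad}$. This is handled by noting that $\tfrac{\partial f}{\partial y}(\cdot,y_u)$ is nonnegative (by \textbf{(A1)}) and bounded in $L^\infty(\Omega)$ by $C_{f,M_{\alpha,\beta}}$ independently of $u\in\mathcal{U}_{ad}$, thanks to the uniform $L^\infty$-bound on $y_u$ from \eqref{EstSolEqSt} and the coefficient bound in \textbf{(A1)}. Consequently a single Stampacchia/De Giorgi–type constant, depending only on $\Omega$, $A$, and $M_{\alpha,\beta}$, governs all the regularity estimates invoked above.
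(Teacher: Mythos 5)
Your proposal is correct, and for the first estimate it follows essentially the route the paper intends: the paper itself only cites \cite[Lemma~2.6]{Cas12SICON}, but its proof of the perturbed analogue (Lemma~\ref{Lem26CaEx}) reveals the argument, namely subtracting the two linearized equations \eqref{EqSolZuv}, writing the coefficient difference via the mean value theorem as $\tfrac{\partial^2 f}{\partial y^2}(x,\hy)(y_u-y_{\ou})z_v$, and closing with the Lipschitz estimate for $u\mapsto y_u$ (Lemma~\ref{Lem25CaEx}) and uniform elliptic regularity. Where you genuinely diverge is the second estimate. The paper (following Casas) goes through the $W^{1,p}_0$ regularity theory for equations with $L^1$ right-hand side: $\|z_{u,v}\|_{W^{1,p}_0(\Omega)}\leq C_p\|v\|_{L^1(\Omega)}$ for $p$ close to $N/(N-1)$, followed by the Sobolev embedding $W^{1,p}_0(\Omega)\hookrightarrow L^2(\Omega)$, valid for $N\leq 3$. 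You instead use a transposition/duality argument: the $L^1\to L^2$ bound for the solution operator is the dual statement of the $L^2\to L^\infty$ bound for the adjoint operator, which is a standard Stampacchia estimate since $L^2(\Omega)\hookrightarrow L^q(\Omega)$ with $q>N/2$ for $N\leq 3$. Both arguments use the dimension restriction in the same place and give the same uniformity in $u\in\mathcal{U}_{ad}$ (nonnegative, uniformly bounded zeroth-order coefficient); your version buys self-containedness, needing only the classical $L^\infty$ estimate rather than the less elementary Boccardo--Gallou\"et-type $W^{1,p}$ theory for $L^1$ data, at the cost of a small density step to define $z_{u,v}$ for $v\in L^1(\Omega)\setminus L^2(\Omega)$ by transposition. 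One cosmetic point: the Lipschitz bound $\|y_u-y_{\ou}\|_{L^\infty(\Omega)}\leq L\|u-\ou\|_{L^2(\Omega)}$ should be justified by the uniform a priori estimate for the difference equation (as in Lemma~\ref{Lem25CaEx}) rather than by ``$G\in\mathcal{C}^2$ plus the mean value theorem,'' since boundedness of $G'$ on the non-compact set $\mathcal{U}_{ad}$ is exactly what needs the uniform coefficient bounds you invoke at the end; this is a presentational issue, not a gap.
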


\begin{Lemma}\label{LemCas27}{\rm(See \cite[Lemma~2.7]{Cas12SICON})}
For every $\varepsilon>0$, there exists $\rho>0$ such that for
$u\in\mathcal{U}_{ad}$ with $\|u-\ou\|_{L^2(\Omega)}\leq\rho$ the
following inequality holds
$$\big|J''(u)v^2-J''(\ou)v^2\big|\leq\varepsilon\|z_v\|^2_{L^2(\Omega)},\ \forall v\in L^2(\Omega).$$
\end{Lemma}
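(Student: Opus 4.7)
My plan is to prove \eqref{GrOpCd} by Taylor-expanding $J$ around $\ou$, applying Proposition~\ref{PropFstCd} to the first-order part, and controlling the second-order part through a decomposition of $v:=u-\ou$ adapted to the extended critical cone $C^\tau_{\ou}$. Writing
$$J(u) - J(\ou) = J'(\ou)v + \tfrac{1}{2}J''(u_\theta)v^2$$
for some $u_\theta = \ou + \theta(u-\ou)\in\mathcal{U}_{ad}$, $\theta\in(0,1)$, Proposition~\ref{PropFstCd} already gives $J'(\ou)v \geq \kappa\|v\|^{1+1/\ae}_{L^1(\Omega)}$; I will split this into $\tfrac{\kappa}{2}\|v\|^{1+1/\ae}_{L^1}$, which goes straight into \eqref{GrOpCd}, and a reserved half of $J'(\ou)v$ that I keep in hand to absorb error terms.

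The decisive step is the splitting $v = v_1 + v_2$, where $v_1$ is the restriction of $v$ to $\{|\varphi_{\ou}|\leq\tau\}$ and $v_2$ the restriction to $\{|\varphi_{\ou}|>\tau\}$. Because $u\in\mathcal{U}_{ad}$, the sign conditions $v_1(x)\geq 0$ on $\{\ou=\alpha\}$ and $v_1(x)\leq 0$ on $\{\ou=\beta\}$ are automatic, and $v_1$ vanishes on $\{|\varphi_{\ou}|>\tau\}$ by construction, so $v_1\in C^\tau_{\ou}$. Hypothesis \eqref{SOrdCd} then gives $J''(\ou)v_1^2 \geq \delta\|z_{v_1}\|^2_{L^2(\Omega)}$, and Lemma~\ref{LemCas27} used with threshold $\varepsilon_0 := \delta/4$ yields a radius $\rho>0$ such that
$$J''(u_\theta)v_1^2 \geq \tfrac{3\delta}{4}\|z_{v_1}\|^2_{L^2(\Omega)}\quad\text{whenever }\|u-\ou\|_{L^2(\Omega)}\leq\rho.$$
This is the only place where the second-order condition is invoked.

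The residual contributions $J''(u_\theta)(v_1,v_2)$ and $J''(u_\theta)v_2^2$ are handled by the integral formula \eqref{SDeriCost} together with the uniform bound on $\tfrac{\partial^2L}{\partial y^2}(\cdot,y_{u_\theta}) - \varphi_{u_\theta}\tfrac{\partial^2f}{\partial y^2}(\cdot,y_{u_\theta})$ coming from (A1)--(A2) and \eqref{EstSolEqSt}. Lemma~\ref{LemCas26} lets me replace each $z_{u_\theta,v_i}$ by $z_{v_i}$ up to a constant close to $1$, and supplies $\|z_{v_2}\|_{L^2}\leq C_3\|v_2\|_{L^1}$. The crucial pointwise observation is that $|\varphi_{\ou}|>\tau$ on the support of $v_2$, hence
$$\|v_2\|_{L^1(\Omega)} \leq \tau^{-1}\int_\Omega|\varphi_{\ou}||v|\,dx = \tau^{-1}J'(\ou)v.$$
Applying Young's inequality to the cross term with weight $\delta/8$ then bounds the two residual terms together by $\tfrac{\delta}{8}\|z_{v_1}\|^2_{L^2} + C_1(J'(\ou)v)^2$ for a constant $C_1$ independent of $v$.

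To close, I use $\|z_v\|^2_{L^2} \leq 2\|z_{v_1}\|^2_{L^2} + 2C_3^2\tau^{-2}(J'(\ou)v)^2$, so that $(3\delta/4 - \delta/8)\|z_{v_1}\|^2_{L^2}$ covers $\tfrac{\delta}{8}\|z_v\|^2_{L^2}$ modulo a further multiple of $(J'(\ou)v)^2$. Since $J'(\ou)v \leq \|\varphi_{\ou}\|_{L^\infty}\|v\|_{L^1}\leq\|\varphi_{\ou}\|_{L^\infty}|\Omega|^{1/2}\|u-\ou\|_{L^2(\Omega)}$, for $\|u-\ou\|_{L^2(\Omega)}\leq\varepsilon$ with $\varepsilon\leq\rho$ small enough every remaining $(J'(\ou)v)^2$ factor is dominated by an arbitrarily small multiple of $J'(\ou)v$, and hence absorbed into the reserved half of $J'(\ou)v\geq\kappa\|v\|^{1+1/\ae}_{L^1}$. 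The main technical obstacle is precisely this bookkeeping: $v_2$ lies outside $C^\tau_{\ou}$ and is invisible to \eqref{SOrdCd}, and its only leverage is the pointwise bound $|\varphi_{\ou}|>\tau$ on its support, which must be converted---via the identity for $J'(\ou)v$ and Lemma~\ref{LemCas26}---into a quantity that the reserved part of $J'(\ou)v$ can swallow.
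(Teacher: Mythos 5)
Your proposal does not prove the statement it was asked to prove. The statement is Lemma~\ref{LemCas27}: for every $\varepsilon>0$ there is a $\rho>0$ such that $\big|J''(u)v^2-J''(\ou)v^2\big|\leq\varepsilon\|z_v\|^2_{L^2(\Omega)}$ for all $v\in L^2(\Omega)$ whenever $\|u-\ou\|_{L^2(\Omega)}\leq\rho$. What you have written is instead a (reasonable-looking) sketch of the proof of the quadratic growth estimate \eqref{GrOpCd}, i.e.\ of Theorem~\ref{ThmSSC} --- you say so yourself in your first sentence. Worse, your argument explicitly invokes Lemma~\ref{LemCas27} ``with threshold $\varepsilon_0:=\delta/4$'' as a known tool to pass from $J''(\ou)v_1^2$ to $J''(u_\theta)v_1^2$. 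So as a proof of Lemma~\ref{LemCas27} the proposal is circular: it assumes the conclusion. (For what it is worth, the paper does not prove this lemma either; it is quoted verbatim from Casas, \cite[Lemma~2.7]{Cas12SICON}.)

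A genuine proof of the lemma would have to work directly with the representation \eqref{SDeriCost}: writing $F(x,u)=\frac{\partial^2L}{\partial y^2}(x,y_u)-\varphi_u\frac{\partial^2f}{\partial y^2}(x,y_u)$, one has
$$J''(u)v^2-J''(\ou)v^2=\int_\Omega\big(F(x,u)-F(x,\ou)\big)z_{u,v}^2\,dx+\int_\Omega F(x,\ou)\big(z_{u,v}^2-z_v^2\big)\,dx.$$
The first integral is made small via the uniform continuity of $\frac{\partial^2L}{\partial y^2}$ and $\frac{\partial^2f}{\partial y^2}$ in \textbf{(A1)}--\textbf{(A2)} combined with the Lipschitz estimate $\|y_u-y_{\ou}\|_{L^\infty(\Omega)}+\|\varphi_u-\varphi_{\ou}\|_{L^\infty(\Omega)}\leq C\|u-\ou\|_{L^2(\Omega)}$ (the content of Lemma~\ref{Lem25CaEx}); the second is controlled by the bound $\|F(\cdot,\ou)\|_{L^\infty(\Omega)}\leq K_M$ together with the first estimate of Lemma~\ref{LemCas26}, which gives $\|z_{u,v}-z_v\|_{L^2(\Omega)}\leq C_2\|u-\ou\|_{L^2(\Omega)}\|z_v\|_{L^2(\Omega)}$ and hence $\|z_{u,v}^2-z_v^2\|_{L^1(\Omega)}\leq C\|u-\ou\|_{L^2(\Omega)}\|z_v\|_{L^2(\Omega)}^2$ for $\|u-\ou\|_{L^2(\Omega)}$ bounded. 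This is exactly the structure of the paper's own Lemma~\ref{Lem27CaEx}, the perturbed analogue, and none of it appears in your proposal.
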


We are going to prove Theorem~\ref{ThmSSC}, several techniques used
in this proof are similar as in the proof of
\cite[Theorem~2.4]{Cas12SICON}.

\textbf{\emph{Proof of Theorem~\ref{ThmSSC}}.} Let us define the
function $F:\Omega\times\mathcal{U}_{ad}\to L^\infty(\Omega)$ by
$$F(x,u)=\frac{\partial^2L}{\partial y^2}(x,y_u)-\varphi_u\frac{\partial^2f}{\partial y^2}(x,y_u).$$
Then $F$ is well-defined due to the assumptions on $f$ and $L$, and
$$\|y_u\|_{L^\infty(\Omega)}+\|\varphi_u\|_{L^\infty(\Omega)}\leq M,\ \forall u\in\mathcal{U}_{ad},$$
for some $M>0$. We also have
$$\|F(x,u)\|_\infty\leq K_M,\ \forall u\in\mathcal{U}_{ad}.$$
We observe that for every $v,w\in L^2(\Omega)$ and
$u\in\mathcal{U}_{ad}$,
\begin{equation}
    |J''(u)(v,w)|=\left|\int_\Omega F(x,u(x))z_{u,v}z_{u,w}dx\right|\leq
    K_M\|z_{u,v}\|_{L^2(\Omega)}\|z_{u,w}\|_{L^2(\Omega)}.
\end{equation}
By Lemma~\ref{LemCas27}, there exists $\varepsilon_0>0$ such that
$$|J''(u)v^2-J''(\ou)v^2|\leq\frac{\delta}{4}\|z_v\|^2_{L^2(\Omega)}\quad
  \mbox{when}\ \|u-\ou\|_{L^2(\Omega)}\leq\varepsilon_0.$$
Take $\varepsilon$ with $0<\varepsilon<\varepsilon_0$ such that
\begin{equation}\label{ChseVarep}
    \frac{\tau}{2C^2_3\varepsilon\sqrt{|\Omega|}}-\frac{K_M(C_2\varepsilon+1)^2}{2}
    -\frac{2K^2_M(C_2\varepsilon+1)^4}{\delta}\geq\frac{\delta}{4},
\end{equation}
where $C_2$ and $C_3$ are given in Lemma~\ref{LemCas26}.

Given $u\in\oB^2_\varepsilon(\ou)\cap\mathcal{U}_{ad}$, we define
$$v(x)=\begin{cases}
          u(x)-\ou(x),   &\mbox{if}\ |\varphi_{\ou}(x)|\leq\tau\\
          0,             &\mbox{otherwise}
       \end{cases}
  \qquad\mbox{and}\quad w=(u-\ou)-v.$$
We see that $v\in C^\tau_{\ou}$. Making a Taylor expansion of second
order we obtain
$$J(u)=J(\ou)+J'(\ou)(u-\ou)+\frac{1}{2}J''(\hu)(u-\ou)^2$$
for some $\hu=\ou+\theta(u-\ou)$ with $\theta\in(0,1)$. From
\eqref{ValOuVarp} and $u-\ou=v+w$ we have
$$\begin{aligned}
    J(u)
    &=J(\ou)+\frac{1}{2}J'(\ou)(u-\ou)+\frac{1}{2}\int_\Omega|\varphi_{\ou}||u-\ou|dx+\frac{1}{2}J''(\hu)(u-\ou)^2\\
    &=J(\ou)+\frac{1}{2}J'(\ou)(u-\ou)+\frac{1}{2}\int_\Omega|\varphi_{\ou}||u-\ou|dx\\
    &\qquad\quad\ +\frac{1}{2}J''(\ou)v^2+\frac{1}{2}\big(J''(\hu)v^2-J''(\ou)v^2\big)
     +\frac{1}{2}J''(\hu)w^2+J''(\hu)(v,w)\\
    &\mbox{(with Proposition~\ref{PropFstCd})}\\
    &\geq J(\ou)+\frac{1}{2}\kappa\|u-\ou\|^{1+\frac{1}{\ae}}_{L^1(\Omega)}
     +\frac{1}{2}\int_\Omega|\varphi_{\ou}||w|dx+\frac{\delta}{2}\|z_v\|^2_{L^2(\Omega)}
     -\frac{\delta}{8}\|z_v\|^2_{L^2(\Omega)}\\
    &\qquad\quad\ -\frac{1}{2}K_M\|z_{\hu,w}\|^2_{L^2(\Omega)}
     -K_M\|z_{\hu,v}\|_{L^2(\Omega)}\|z_{\hu,w}\|_{L^2(\Omega)}\\
    &\geq J(\ou)+\frac{1}{2}\kappa\|u-\ou\|^{1+\frac{1}{\ae}}_{L^1(\Omega)}
     +\frac{1}{2}\tau\|w\|_{L^1(\Omega)}+\frac{3\delta}{8}\|z_v\|^2_{L^2(\Omega)}\\
    &\qquad\quad\ -\frac{1}{2}K_M\|z_{\hu,w}\|^2_{L^2(\Omega)}
     -K_M\|z_{\hu,v}\|_{L^2(\Omega)}\|z_{\hu,w}\|_{L^2(\Omega)}.
\end{aligned}$$

Note that
$\|\hu-\ou\|_{L^2(\Omega)}=\|\theta(u-\ou)\|_{L^2(\Omega)}\leq\|u-\ou\|_{L^2(\Omega)}\leq\varepsilon$.
Hence, by Lemma~\ref{LemCas26} we deduce that
$$\begin{aligned}
    \|z_{\hu,w}\|_{L^2(\Omega)}
    &\leq\|z_{\hu,w}-z_w\|_{L^2(\Omega)}+\|z_w\|_{L^2(\Omega)}\\
    &\leq C_2\|\hu-\ou\|_{L^2(\Omega)}\|z_w\|_{L^2(\Omega)}+\|z_w\|_{L^2(\Omega)}\\
    &\leq(C_2\varepsilon+1)\|z_w\|_{L^2(\Omega)}.
\end{aligned}$$
Similarly, we also have
$$\|z_{\hu,v}\|_{L^2(\Omega)}\leq(C_2\varepsilon+1)\|z_v\|_{L^2(\Omega)}.$$
By arguing the same as in the proof of Theorem~2.4 in
\cite{Cas12SICON} we obtain
$$\frac{1}{C^2_3\varepsilon\sqrt{|\Omega|}}\|z_w\|^2_{L^2(\Omega)}\leq\|w\|_{L^1(\Omega)}.$$
Therefore, we get
$$\begin{aligned}
    J(u)
    &\geq J(\ou)+\frac{1}{2}\kappa\|u-\ou\|^{1+\frac{1}{\ae}}_{L^1(\Omega)}
     +\frac{1}{2}\tau\|w\|_{L^1(\Omega)}+\frac{3\delta}{8}\|z_v\|^2_{L^2(\Omega)}\\
    &\qquad\quad\ -\frac{1}{2}K_M\|z_{\hu,w}\|^2_{L^2(\Omega)}
     -K_M\|z_{\hu,v}\|_{L^2(\Omega)}\|z_{\hu,w}\|_{L^2(\Omega)}\\
    &\geq J(\ou)+\frac{1}{2}\kappa\|u-\ou\|^{1+\frac{1}{\ae}}_{L^1(\Omega)}
     +\frac{\tau}{2C^2_3\varepsilon\sqrt{|\Omega|}}\|z_w\|^2_{L^2(\Omega)}+\frac{3\delta}{8}\|z_v\|^2_{L^2(\Omega)}\\
    &\qquad\quad\ -\frac{1}{2}K_M(C_2\varepsilon+1)^2\|z_w\|^2_{L^2(\Omega)}
     -K_M(C_2\varepsilon+1)^2\|z_v\|_{L^2(\Omega)}\|z_w\|_{L^2(\Omega)}\\
    &\geq J(\ou)+\frac{1}{2}\kappa\|u-\ou\|^{1+\frac{1}{\ae}}_{L^1(\Omega)}
     +\frac{3\delta}{8}\|z_v\|^2_{L^2(\Omega)}-\frac{\delta}{8}\|z_v\|^2_{L^2(\Omega)}\\
    &\qquad\quad\ +\bigg(\frac{\tau}{2C^2_3\varepsilon\sqrt{|\Omega|}}-\frac{K_M(C_2\varepsilon+1)^2}{2}
     -\frac{2K^2_M(C_2\varepsilon+1)^4}{\delta}\bigg)\|z_w\|^2_{L^2(\Omega)}.
\end{aligned}$$
From this and using \eqref{ChseVarep} we obtain
$$\begin{aligned}
    J(u)
    &\geq J(\ou)+\frac{1}{2}\kappa\|u-\ou\|^{1+\frac{1}{\ae}}_{L^1(\Omega)}
     +\frac{\delta}{4}\|z_v\|^2_{L^2(\Omega)}+\frac{\delta}{4}\|z_w\|_{L^2(\Omega)}\\
    &\geq J(\ou)+\frac{1}{2}\kappa\|u-\ou\|^{1+\frac{1}{\ae}}_{L^1(\Omega)}
     +\frac{\delta}{8}\|z_v+z_w\|^2_{L^2(\Omega)}\\
    &=J(\ou)+\frac{1}{2}\kappa\|u-\ou\|^{1+\frac{1}{\ae}}_{L^1(\Omega)}
     +\frac{\delta}{8}\|z_{u-\ou}\|_{L^2(\Omega)},
\end{aligned}$$
which yields \eqref{GrOpCd}. $\hfill\Box$

\section{Stability for bang-bang control problems}

In this section, we investigate stability of
problem~\eqref{OptConPro} for bang-bang controls, where the state
equation and the cost functional undergo linear perturbations.

We now consider the perturbed control problem of
problem~\eqref{OptConPro} as follows
\begin{equation}\label{PerProWtCd}
\begin{cases}
    {\rm Minimize}\quad \mJ(u,e)=J(u+e_y)+(e_J,y_{u+e_y})_{L^2(\Omega)}\\
    {\rm subject\ to}\quad u\in\mathcal{U}_{ad},
\end{cases}
\end{equation}
where $J(\cdot)$ is the cost functional of
problem~\eqref{OptConPro}, $y_{u+e_y}$ is the weak solution of the
perturbed Dirichlet problem
\begin{equation}\label{PerStaEqWtCd}
\begin{cases}
\begin{aligned}
    Ay+f(x,y)&=u+e_y\ &&\mbox{in}\ \Omega\\
            y&=0      &&\mbox{on}\ \Gamma,
\end{aligned}
\end{cases}
\end{equation}
and $e_J\in L^2(\Omega)$, $e_y\in L^2(\Omega)$ are parameters.

In what follows, let us put $e=(e_J,e_y)\in L^2(\Omega)\times
L^2(\Omega)$ and impose the sum norm in the product space
$E:=L^2(\Omega)\times L^2(\Omega)$, i.e.,
\begin{equation}\label{DefNormE}
    \|e\|_E=\|e_J\|_{L^2(\Omega)}+\|e_y\|_{L^2(\Omega)}.
\end{equation}

\subsection{Existence and stability of solutions of perturbed problems}

In this subsection, we are interested in the existence and stability
of solutions of the perturbed control problem~\eqref{PerProWtCd}.
Given an arbitrary $\varepsilon>0$, we first consider the following
auxiliary perturbed control problem
\begin{equation}\label{PerturbPro}
\begin{cases}
    {\rm Minimize}\quad \mJ(u,e)=J(u+e_y)+(e_J,y_{u+e_y})_{L^2(\Omega)}\\
    {\rm subject\ to}\quad u\in\mathcal{U}^\varepsilon_{ad}:=\mathcal{U}_{ad}\cap\oB^2_\varepsilon(\ou),
\end{cases}
\end{equation}
where $y_{u+e_y}=G(u+e_y)$ is the weak solution of the perturbed
Dirichlet problem
\begin{equation}\label{PerStateEq}
\begin{cases}
\begin{aligned}
    Ay+f(x,y)&=u+e_y\ &&\mbox{in}\ \Omega\\
            y&=0      &&\mbox{on}\ \Gamma,
\end{aligned}
\end{cases}
\end{equation}
and $e_J\in L^2(\Omega)$, $e_y\in L^2(\Omega)$ are parameters.

\begin{Theorem}\label{ThmAxExSol}
Assume that {\rm\textbf{(A1)}-\textbf{(A3)}} hold and let $\ou$ be a
local solution of problem~\eqref{OptConPro}. Then, the perturbed
control problem~\eqref{PerturbPro} has at least one optimal control
$\ou_e$ with associated optimal perturbed state $y_{\ou_e+e_y}\in
H^1(\Omega)\cap C(\bar\Omega)$ for small $e$.
\end{Theorem}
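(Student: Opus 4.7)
My plan is to apply the direct method of the calculus of variations. The feasible set $\mathcal{U}_{ad}^\varepsilon = \mathcal{U}_{ad}\cap \oB^2_\varepsilon(\ou)$ is nonempty (it contains $\ou$), bounded, convex, and strongly closed in $L^2(\Omega)$, hence weakly sequentially compact by reflexivity of $L^2$. The functional $\mathcal{J}(\cdot,e)$ is bounded below on $\mathcal{U}_{ad}^\varepsilon$: since $\mathcal{U}_{ad}$ is bounded in $L^\infty(\Omega)$, the perturbed right-hand sides $u+e_y$ vary in a bounded subset of $L^2(\Omega)$; since $2>N/2$ for $N\in\{1,2,3\}$, the state theory recalled in Section~2 yields a uniform bound on $y_{u+e_y}$ in $H_0^1(\Omega)\cap C(\bar\Omega)$, after which hypothesis \textbf{(A2)} bounds $J(u+e_y)$ and the Cauchy--Schwarz inequality bounds $(e_J,y_{u+e_y})_{L^2(\Omega)}$.

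I would then take a minimizing sequence $\{u_n\}\subset \mathcal{U}_{ad}^\varepsilon$ and pass to a subsequence, still denoted $\{u_n\}$, with $u_n\rightharpoonup \ou_e$ weakly in $L^2(\Omega)$. Mazur's theorem (strong closedness plus convexity) places $\ou_e$ in $\mathcal{U}_{ad}^\varepsilon$. The crux is to show $\mathcal{J}(\ou_e,e)\le\liminf_n \mathcal{J}(u_n,e)$, for which I prove that $y_{u_n+e_y}\to y_{\ou_e+e_y}$ strongly in a sufficiently rich topology. From the uniform $H_0^1\cap C(\bar\Omega)$ bound on the states, Rellich--Kondrachov together with Banach--Alaoglu yield a further subsequence along which $y_{u_n+e_y}\rightharpoonup y^\ast$ weakly in $H_0^1(\Omega)$, strongly in $L^q(\Omega)$ for every finite $q$, and pointwise almost everywhere. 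Passing to the limit in the weak formulation of the perturbed state equation, the principal linear part converges by weak $H_0^1$ convergence, the right-hand side by weak $L^2$ convergence, and the semilinear term $\int_\Omega f(x,y_{u_n+e_y})\phi\,dx\to\int_\Omega f(x,y^\ast)\phi\,dx$ by Lebesgue dominated convergence using the $L^\infty$ bound on the states combined with local boundedness of $f$ from \textbf{(A1)}. Uniqueness of the weak solution identifies $y^\ast=y_{\ou_e+e_y}$.

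With this strong convergence in hand, dominated convergence applied via \textbf{(A2)} gives $J(u_n+e_y)\to J(\ou_e+e_y)$, while $(e_J,y_{u_n+e_y})_{L^2(\Omega)}\to(e_J,y_{\ou_e+e_y})_{L^2(\Omega)}$ follows from strong $L^2$ convergence of the states. Hence $\mathcal{J}(\ou_e,e)=\lim_n \mathcal{J}(u_n,e)=\inf_{\mathcal{U}_{ad}^\varepsilon}\mathcal{J}(\cdot,e)$, so $\ou_e$ is optimal for the auxiliary problem. The asserted regularity $y_{\ou_e+e_y}\in H^1(\Omega)\cap C(\bar\Omega)$ is immediate from the Section~2 theory since $\ou_e+e_y\in L^2(\Omega)$ and $2>N/2$. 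The main technical point is the passage to the limit in the nonlinear term $f(\cdot,y_{u_n+e_y})$: this is the only place where the semilinearity matters, and it is resolved precisely because the uniform $C(\bar\Omega)$-bound on the states activates the local bound from \textbf{(A1)} and supplies a dominating constant for Lebesgue's theorem. I should note that smallness of $e$ is not strictly required for existence in this auxiliary problem; any $\varepsilon>0$ suffices because $\ou$ automatically lies in $\mathcal{U}_{ad}^\varepsilon$, and smallness of $e$ will only become relevant in the subsequent stability arguments.
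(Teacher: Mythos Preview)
Your proof is correct and follows essentially the same route as the paper's: the direct method applied to the weakly sequentially compact set $\mathcal{U}_{ad}^\varepsilon$, uniform $H_0^1\cap C(\bar\Omega)$ bounds on the perturbed states, compactness to obtain strong convergence of states, and passage to the limit in the weak formulation of the semilinear equation. The paper's argument differs only cosmetically---it routes the convergence of states through an auxiliary linearized equation and then invokes \cite[Theorem~2.1]{CaReTr08SIOPT} for $C(\bar\Omega)$-convergence, whereas you go directly via Rellich--Kondrachov and dominated convergence; your observation that smallness of $e$ is not actually needed for existence is also correct.
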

\begin{proof}
Fix any parameter $e$ small enough. For each $u\in L^2(\Omega)$,
equation~\eqref{PerStateEq} has a unique weak solution
$y_{u+e_y}=G(u+e_y)\in H^1_0(\Omega)\cap C(\bar\Omega)$. Since
$\mathcal{U}^\varepsilon_{ad}$ is a bounded subset of
$L^\infty(\Omega)$ and thus it is bounded in $L^2(\Omega)$. Hence,
there is some constant $c>0$ such that
$$\|y_{u+e_y}\|_{H^1(\Omega)}+\|y_{u+e_y}\|_{C(\bar\Omega)}\leq c\|u+e_y\|_{L^2(\Omega)}.$$
The latter implies that there is some constant $M>0$ such that
$$\|y_{u+e_y}\|_{C(\bar\Omega)}\leq M,\ \forall u\in\mathcal{U}^\varepsilon_{ad}.$$
We observe that the functional $\mJ(u,e)$ is bounded from
below for $u\in\mathcal{U}^\varepsilon_{ad}$. Therefore, the infimum
$$\mJ_0(e)=\inf_{u\in\mathcal{U}^\varepsilon_{ad}}\mJ(u,e)$$
exists. Let $\{(y_n,u_n)\}_n$ is a minimizing sequence, that is, let
$u_n\in\mathcal{U}^\varepsilon_{ad}$ and $y_n=y_{u_n+e_y}$ be such
that $\mJ(u_n,e)\to\mJ_0(e)$ as $n\to\infty$. We
interpret $\mathcal{U}^\varepsilon_{ad}$ as a nonempty, closed,
bounded and convex set in the reflexive Banach space $L^2(\Omega)$,
thus $\mathcal{U}^\varepsilon_{ad}$ is weakly sequentially compact.
Hence, without loss of generality we may assume that $\{u_n\}$
converges weakly in $L^2(\Omega)$ to some $\ou_e$ in
$\mathcal{U}^\varepsilon_{ad}$, i.e.,
$$u_n\rightharpoonup\ou_e\quad\mbox{as}\ n\to\infty.$$
Consider the sequence
$$t_n(\cdot)=f(\cdot,y_n(\cdot)),\ \forall n\in\N.$$
Recall that, for every $n\in\N$, $\|y_n\|_{L^\infty(\Omega)}\leq M$
since $|y_n(x)|\leq M$ for all $x\in\bar\Omega$. Then $\{t_n\}$ is
also bounded in $L^\infty(\Omega)$, and thus bounded in
$L^2(\Omega)$. Without loss of generality we may assume that
$\{y_n\}$ and $\{t_n\}$ converges weakly in $L^2(\Omega)$ to some
$y\in L^2(\Omega)$ and $t\in L^2(\Omega)$ respectively.

Observe that $y_n$ solves the problem
$$\begin{cases}
\begin{aligned}
    Ay_n+y_n&=r_n\ &&\mbox{in}\ \Omega\\
         y_n&=0    &&\mbox{on}\ \Gamma,
\end{aligned}
\end{cases}$$
where $r_n:=-f(x,y_n)+y_n+u_n+e_y$ converges weakly in $L^2(\Omega)$
to $-t+y+\ou_e+e_y$. By virtue of \cite[Theorem~2.6]{Trolt10B}, the
mapping $r_n\mapsto y_n$ is linear and continuous from $L^2(\Omega)$
into $H^1(\Omega)$. Since every continuous linear operator is also
weakly continuous, $\{y_n\}$ must converge weakly in $H^1(\Omega)$
to some $y_e\in H^1(\Omega)$, i.e.,
$$y_n\rightharpoonup y_e\quad\mbox{as}\ n\to\infty.$$
Since $H^1(\Omega)$ is compactly embedded in $L^2(\Omega)$ by
\cite[Theorem~7.4]{Trolt10B}, we have
$$\|y_n-y_e\|_{L^2(\Omega)}\to0\quad\mbox{as}\ n\to\infty.$$
The set $\{v\in L^2(\Omega):\|v\|_{L^\infty(\Omega)}\leq M\}$ is
bounded, closed, and convex, thus it is also weakly sequentially
closed. Consequently, $y_e$ belongs to this set since $|y_n(x)|\leq
M$ for all $x\in\bar\Omega$.

We have
$$\int_{\Omega}\nabla y_n\cdot\nabla vdx+\int_{\Omega}f(\cdot,y_n)vdx=\int_{\Omega}(u_n+e_y)vdx,\
  \forall v\in H^1(\Omega).$$
Passing to the limit as $n\to\infty$, we see that
$y_n\rightharpoonup y_e$ in $H^1(\Omega)$ yields the convergence of
the first integral. In addition, $y_n\to y_e$ in $L^2(\Omega)$ and
$\|y_n\|_{L^\infty(\Omega)}\leq M$ yields the convergence of the
second integral. By dominated convergence theorem, we have
$$\int_{\Omega}\nabla y_e\cdot\nabla vdx+\int_{\Omega}f(\cdot,y_e)vdx=\int_{\Omega}(\ou_e+e_y)vdx,\
  \forall v\in H^1(\Omega),$$
i.e., $y_e\equiv y_{\ou_e+e_y}$ is the weak solution of
\eqref{PerStateEq} corresponding to the right-hand side $\ou_e+e_y$.

Since $u_n\rightharpoonup\ou_e$ in $L^2(\Omega)$, we have
$u_n+e_y\rightharpoonup\ou_e+e_y$ and thus $y_{u_n+e_y}\to
y_{\ou_e+e_y}$ in $C(\bar\Omega)$; see, e.g.,
\cite[Theorem~2.1]{CaReTr08SIOPT}. Consequently, we obtain
$$\begin{aligned}
    \mJ_0(e)
    &=\lim_{n\to\infty}\mJ(u_n,e)=\lim_{n\to\infty}\big(J(u_n+e_y)+(e_J,y_{u_n+e_y})_{L^2(\Omega)}\big)\\
    &=J(\ou_e+e_y)+(e_J,y_{\ou_e+e_y})_{L^2(\Omega)}=\mJ(\ou_e,e).
\end{aligned}$$
This concludes that $\ou_e$ is an optimal control of the perturbed
problem \eqref{PerturbPro} with associated optimal perturbed state
$y_{\ou_e+e_y}\in H^1(\Omega)\cap C(\bar\Omega)$. $\hfill\Box$
\end{proof}

\medskip
We recall that $y$ is an extremal point of a set $K$ if and only if
$y=\lambda y_1+(1-\lambda)y_2$ with $y_1,y_2\in K$ and $0<\lambda<1$
entails $y_1=y_2=y$. For  a generic set $S$ we denote its closed
convex hull by $\overline{\conv}S$.
\begin{Theorem}\label{ThmVisinL1}{\rm(See \cite[Theorem~1]{Visin84CPDE})}
Assume that $u_n\rightharpoonup u$ in $L^1(\Omega)$ and that $u(x)$
is an extremal point of
$K(x):=\overline{\conv}\big(\{u_n(x)\}_{n\in\N}\cup\{u(x\}\big)$ for
a.e. $x\in\Omega$. Then, $u_n\to u$ in $L^1(\Omega)$.
\end{Theorem}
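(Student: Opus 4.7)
The plan is to prove this via Young measures, which convert extremality of the weak limit into strong convergence in a transparent way. Weak convergence in $L^1(\Omega)$ yields uniform integrability of $\{u_n\}$ by the Dunford--Pettis theorem, so after passing to a subsequence (not relabelled) the fundamental theorem on Young measures produces a family of probability measures $\{\nu_x\}_{x\in\Omega}$ on $\R$ such that, for every Carath\'eodory function $\Phi$ making $\{\Phi(\cdot,u_n(\cdot))\}$ uniformly integrable,
$$\int_\Omega \Phi(x,u_n(x))\,dx \longrightarrow \int_\Omega\!\int_\R \Phi(x,\lambda)\,d\nu_x(\lambda)\,dx.$$

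I would then read off two facts about $\nu_x$. Testing with $\Phi(x,\lambda)=\lambda\,\mathbf{1}_E(x)$ on arbitrary measurable $E\subset\Omega$ and using $u_n\rightharpoonup u$ gives the barycenter identity $\int_\R \lambda\,d\nu_x(\lambda)=u(x)$ for a.e.\ $x$. Next I would check that $\mathrm{supp}(\nu_x)\subset K(x)$ a.e.: since $K(x)$ is closed and contains each $u_n(x)$ as well as $u(x)$, any open set $U$ disjoint from $K(x)$ can be separated from $K(x)$ by a bounded Lipschitz function, and testing $\Phi(x,\lambda)$ against that function forces $\nu_x(U)=0$. Combining the two facts, $\nu_x$ is a probability measure concentrated on the convex set $K(x)$ with barycenter equal to the extremal point $u(x)$; hence $\nu_x=\delta_{u(x)}$ for a.e.\ $x$.

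To conclude, I would apply the Young-measure identity once more with $\Phi(x,\lambda)=|\lambda-u(x)|$, noting that this family is uniformly integrable because $\{u_n\}\cup\{u\}$ is. Then
$$\int_\Omega |u_n(x)-u(x)|\,dx \longrightarrow \int_\Omega\!\int_\R |\lambda-u(x)|\,d\delta_{u(x)}(\lambda)\,dx = 0,$$
which gives strong $L^1$ convergence along the extracted subsequence. A standard subsequence-of-subsequences argument (every subsequence admits a further Young-measure extraction, and all limits are $u$) promotes this to strong convergence of the full original sequence.

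The main obstacle is the support identification $\mathrm{supp}(\nu_x)\subset K(x)$ a.e. The set-valued map $x\mapsto K(x)$ is only measurable, not continuous, in $x$, and to run the separation argument uniformly in $x$ one must exhibit a measurable selection of Lipschitz separators or invoke a measurable projection theorem. Bypassing Young measures entirely---along the lines of Visintin's original 1984 argument---would instead demand a delicate truncation combined with strictly convex test functions and Egorov's theorem; the Young-measure framework packages that technical core into standard machinery.
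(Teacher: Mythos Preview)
The paper does not prove this theorem at all: it is quoted verbatim from Visintin~\cite{Visin84CPDE} and used as a black box in the proof of Theorem~\ref{ThmSolSbL2}. So there is no ``paper's own proof'' to compare against; any correct argument you supply is automatically a different route from what the authors do.

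Your Young-measure strategy is a standard and correct modern proof of Visintin's result (this line goes back to Balder and others). The outline---Dunford--Pettis for uniform integrability, generation of a Young measure $(\nu_x)$, barycenter identification $\int\lambda\,d\nu_x=u(x)$, support inclusion $\mathrm{supp}\,\nu_x\subset K(x)$, the extremality forcing $\nu_x=\delta_{u(x)}$, and the test $\Phi(x,\lambda)=|\lambda-u(x)|$ for strong convergence---is sound. Visintin's original 1984 argument instead builds strictly convex test integrands and exploits lower semicontinuity directly; the Young-measure framework, as you note, repackages that core more cleanly.

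The ``main obstacle'' you flag is not a genuine gap here, because the target space is $\R$. Each $K(x)$ is a closed interval $[a(x),b(x)]$ (allowing $\pm\infty$), with
\[
a(x)=\inf_n u_n(x)\wedge u(x),\qquad b(x)=\sup_n u_n(x)\vee u(x),
\]
both measurable as countable infima/suprema of measurable functions. Testing the Young-measure identity with the bounded Carath\'eodory functions $\Phi_+(x,\lambda)=(\lambda-b(x))_+\wedge 1$ and $\Phi_-(x,\lambda)=(a(x)-\lambda)_+\wedge 1$ gives $\int\Phi_\pm(x,\lambda)\,d\nu_x(\lambda)=0$ for a.e.\ $x$, hence $\nu_x\big(\R\setminus[a(x),b(x)]\big)=0$. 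No measurable selection of separators or projection theorem is needed. With that, the barycenter-at-an-endpoint argument ($\int(\lambda-u(x))\,d\nu_x=0$ with $\lambda-u(x)\ge 0$ or $\le 0$ on $K(x)$) gives $\nu_x=\delta_{u(x)}$ immediately, and the rest of your proof goes through.
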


The forthcoming theorem shows that under the assumptions
{\rm\textbf{(A1)}-\textbf{(A3)}}, the perturbed control problem
\eqref{PerturbPro} is stable in $L^2(\Omega)$.

\begin{Theorem}\label{ThmSolSbL2}
Assume that {\rm\textbf{(A1)}-\textbf{(A3)}} hold. Let $\ou$ be a
local bang-bang solution of problem~\eqref{OptConPro} in the
neighborhood $\oB^2_\varepsilon(\ou)$ and let $\ou_e$ be a global
solution of problem~\eqref{PerturbPro} with respect to the parameter
$e$. Then, we have $\ou_e\to\ou$ in $L^2(\Omega)$ as $e\to0$ in $E$.
\end{Theorem}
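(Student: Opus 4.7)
The plan is to extract weakly convergent subsequences, identify the weak limit with $\ou$, and then upgrade weak $L^2$-convergence to strong convergence via Visintin's theorem (Theorem~\ref{ThmVisinL1}). By a standard subsequence argument, it suffices to show that every sequence $e_n=(e_{J,n},e_{y,n})\to0$ in $E$ admits a subsequence along which $\ou_{e_n}\to\ou$ strongly in $L^2(\Omega)$.

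First, since $\ou_{e_n}\in\mathcal{U}^\varepsilon_{ad}\subset\mathcal{U}_{ad}$, the family $\{\ou_{e_n}\}_n$ is uniformly bounded in $L^\infty(\Omega)$ (in fact by $\|\alpha\|_{L^\infty(\Omega)}+\|\beta\|_{L^\infty(\Omega)}$), hence in $L^2(\Omega)$. Reflexivity of $L^2(\Omega)$ combined with the weak closedness of the convex, norm-closed set $\mathcal{U}^\varepsilon_{ad}$ lets me extract a subsequence (not relabeled) with $\ou_{e_n}\rightharpoonup\wu$ weakly in $L^2(\Omega)$ for some $\wu\in\mathcal{U}^\varepsilon_{ad}$.

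Next, mimicking the argument in the proof of Theorem~\ref{ThmAxExSol}, I would invoke continuity of the perturbed control-to-state map from weak $L^2(\Omega)$ into $C(\bar\Omega)$ (see \cite[Theorem~2.1]{CaReTr08SIOPT}) to obtain $y_{\ou_{e_n}+e_{y,n}}\to y_{\wu}$ in $C(\bar\Omega)$. Then \textbf{(A2)} and dominated convergence give $J(\ou_{e_n}+e_{y,n})\to J(\wu)$, while $(e_{J,n},y_{\ou_{e_n}+e_{y,n}})_{L^2(\Omega)}\to0$ follows from $\|e_{J,n}\|_{L^2(\Omega)}\to0$ against uniformly bounded states. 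The optimality inequality $\mJ(\ou_{e_n},e_n)\leq\mJ(\ou,e_n)$ (by global optimality of $\ou_{e_n}$ on $\mathcal{U}^\varepsilon_{ad}$ and feasibility of $\ou$) passes to the limit, yielding $J(\wu)\leq J(\ou)$. Combined with the local optimality $J(\ou)\leq J(\wu)$ on $\mathcal{U}^\varepsilon_{ad}$, we obtain $J(\wu)=J(\ou)$, so $\wu$ is itself a local minimum; identifying $\wu=\ou$ rests on $\ou$ being the unique local bang-bang minimum in $\mathcal{U}^\varepsilon_{ad}$, which is where strict local optimality is tacitly used.

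With $\wu=\ou$, I would apply Visintin's theorem. Since each $\ou_{e_n}(x),\ou(x)\in[\alpha(x),\beta(x)]$, the closed convex hull $\overline{\conv}(\{\ou_{e_n}(x)\}_n\cup\{\ou(x)\})$ is contained in $[\alpha(x),\beta(x)]$, and the bang-bang property $\ou(x)\in\{\alpha(x),\beta(x)\}$ makes $\ou(x)$ an extreme point of this hull for a.e.\ $x\in\Omega$. Theorem~\ref{ThmVisinL1} then yields $\ou_{e_n}\to\ou$ in $L^1(\Omega)$. Combining with the uniform $L^\infty$-bound and the interpolation $\|\ou_{e_n}-\ou\|_{L^2(\Omega)}^2\leq\|\ou_{e_n}-\ou\|_{L^1(\Omega)}\|\ou_{e_n}-\ou\|_{L^\infty(\Omega)}$ concludes $\ou_{e_n}\to\ou$ in $L^2(\Omega)$. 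The main obstacle is the identification step $\wu=\ou$: without strict local optimality, one can only conclude that $\ou_e$ clusters at the set of local minima of $J$ on $\mathcal{U}^\varepsilon_{ad}$, so the theorem implicitly relies on the bang-bang solution being isolated in its $L^2$-neighborhood (which is consistent with the growth estimate of Theorem~\ref{ThmSSC}).
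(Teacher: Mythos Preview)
Your proposal is correct and follows essentially the same route as the paper: extract a weak $L^2$-subsequential limit $\wu\in\mathcal{U}^\varepsilon_{ad}$, identify $\wu=\ou$ via the optimality inequality $\mJ(\ou_{e_n},e_n)\leq\mJ(\ou,e_n)$, invoke Visintin's theorem (Theorem~\ref{ThmVisinL1}) for strong $L^1$-convergence, and interpolate with the uniform $L^\infty$-bound to reach $L^2$. Your caveat about the identification step is well taken---the paper's proof simply writes ``Hence, we get $\wu=\ou$'' from $\mJ(\wu,0)\leq\mJ(\ou,0)$ without further comment, so it too tacitly assumes strict local optimality of $\ou$ in $\oB^2_\varepsilon(\ou)$.
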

\begin{proof}
Let $\{e_n\}_{n\in\N}$ be such that $e_n\to0$ and let
$\ou_n=\ou_{e_n}\in\oB^2_\varepsilon(\ou)$ be a solution of
\eqref{PerturbPro}. Then, $\ou_n\rightharpoonup\wu$ in $L^2(\Omega)$
for some $\wu\in\oB^2_\varepsilon(\ou)$. Letting $n\to\infty$, we
have $\mJ(\ou_n,e_n)\leq\mJ(\ou,e_n)$, and
$$\mJ(\ou_n,e_n)\to\mJ(\wu,0)\quad\mbox{and}\quad\mJ(\ou,e_n)\to\mJ(\ou,0).$$
It follows that $\mJ(\wu,0)\leq\mJ(\ou,0)$. Hence,
we get $\wu=\ou$. We have shown that $\ou_n\rightharpoonup\ou$ in
$L^2(\Omega)$. This yields $\ou_n\rightharpoonup\ou$ in
$L^1(\Omega)$. We observe that $\ou(x)$ is an extremal point of the
set
$K(x):=\overline{\conv}\big(\{\ou_n(x)\}_{n\in\N}\cup\{\ou(x\}\big)$
for a.e. $x\in\Omega$ since $\ou$ is a bang-bang control. Applying
Theorem~\ref{ThmVisinL1} we infer that $\ou_n\to\ou$ in
$L^1(\Omega)$. Note that $\ou_n\in\mathcal{U}^\varepsilon_{ad}$ and
$\mathcal{U}^\varepsilon_{ad}$ is a bounded set in
$L^\infty(\Omega)$, thus $\|\ou_n-\ou\|_{L^\infty(\Omega)}$ is
bounded for every $n\in\N$. Therefore, there exists a constant $M>0$
such that
$$\|\ou_n-\ou\|_{L^2(\Omega)}\leq\|\ou_n-\ou\|^{1/2}_{L^1(\Omega)}\|\ou_n-\ou\|^{1/2}_{L^\infty(\Omega)}
  \leq M\|\ou_n-\ou\|^{1/2}_{L^1(\Omega)}\to0,$$
where the first inequality is due to H\"older's inequality.
$\hfill\Box$
\end{proof}

\begin{Theorem}\label{ThmHldrEstm}
Assume that {\rm\textbf{(A1)}-\textbf{(A3)}} hold and let $\ou$ be a
local bang-bang solution of problem~\eqref{OptConPro} under
assumption~{\rm\textbf{(A4.$\ae$)}} and condition~\eqref{SOrdCd} for
some $\delta>0$ and $\tau>0$. Then, the perturbed
problem~\eqref{PerProWtCd} has local solutions $\ou_e$ near $\ou$
for small $e$. Moreover, $\ou_e\to\ou$ in $L^2(\Omega)$ as $e\to0$
in $E$.
\end{Theorem}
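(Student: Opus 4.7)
The plan is to combine the three preceding results of this section---Theorems~\ref{ThmSSC},~\ref{ThmAxExSol}, and~\ref{ThmSolSbL2}---with a short localization argument that converts global minimizers of the auxiliary constrained problem~\eqref{PerturbPro} into local solutions of the full perturbed problem~\eqref{PerProWtCd}.

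First I would invoke Theorem~\ref{ThmSSC} to fix the working neighborhood: the hypotheses on $\ou$ (namely \textbf{(A1)}--\textbf{(A3)}, \textbf{(A4.$\ae$)}, and the second-order condition~\eqref{SOrdCd}) yield a radius $\varepsilon_0>0$ such that the quadratic growth estimate~\eqref{GrOpCd} holds on $\mathcal{U}_{ad}\cap\oB^2_{\varepsilon_0}(\ou)$. In particular, $\ou$ is a strict local bang-bang minimum of~\eqref{OptConPro} on this ball in the $L^2$ sense. Then I would fix any $\varepsilon\in(0,\varepsilon_0)$ and, for each perturbation $e\in E$ of sufficiently small norm, apply Theorem~\ref{ThmAxExSol} to produce a global minimizer $\ou_e\in\mathcal{U}^\varepsilon_{ad}=\mathcal{U}_{ad}\cap\oB^2_\varepsilon(\ou)$ of the auxiliary localized problem~\eqref{PerturbPro}. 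Next, Theorem~\ref{ThmSolSbL2}---whose hypothesis that $\ou$ be a local bang-bang solution on $\oB^2_\varepsilon(\ou)$ is precisely what the first step supplied---immediately gives $\ou_e\to\ou$ in $L^2(\Omega)$ as $e\to0$ in $E$, which already establishes the second claim of the theorem.

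To finish, I would upgrade each $\ou_e$ from a global minimizer of the localized problem~\eqref{PerturbPro} to a local solution of the unrestricted perturbed problem~\eqref{PerProWtCd}. The $L^2$-convergence $\ou_e\to\ou$ implies that for all sufficiently small $\|e\|_E$ we have $\|\ou_e-\ou\|_{L^2(\Omega)}<\varepsilon/2$, so $\oB^2_{\varepsilon/2}(\ou_e)\subset\oB^2_\varepsilon(\ou)$. Therefore every $u\in\mathcal{U}_{ad}\cap\oB^2_{\varepsilon/2}(\ou_e)$ is feasible for~\eqref{PerturbPro}, and the optimality of $\ou_e$ there yields $\mJ(\ou_e,e)\leq\mJ(u,e)$; this is exactly the definition of $\ou_e$ being a local solution of~\eqref{PerProWtCd} in the sense of $L^2(\Omega)$.

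I do not anticipate any genuine obstacle: all the substantive technical content---the second-order growth inequality, the direct-method existence proof for the localized problem, and the $L^2$-convergence via Visintin's extremality theorem---has already been packaged in the three preceding theorems. The only point requiring care is the verification that the artificially imposed localization constraint $u\in\oB^2_\varepsilon(\ou)$ becomes inactive at $\ou_e$ for small $e$, and this is immediate from the strict $L^2$-convergence supplied by Theorem~\ref{ThmSolSbL2}.
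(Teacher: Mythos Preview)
Your proposal is correct and follows essentially the same route as the paper: invoke Theorem~\ref{ThmSSC} to fix a radius $\varepsilon$ of strict local optimality, use Theorem~\ref{ThmAxExSol} to obtain a global minimizer $\ou_e$ of the localized problem~\eqref{PerturbPro}, apply Theorem~\ref{ThmSolSbL2} for the $L^2$-convergence, and then observe that this convergence forces $\ou_e$ into the interior of $\oB^2_\varepsilon(\ou)$, so the localization constraint is inactive and $\ou_e$ is a local solution of~\eqref{PerProWtCd}. Your explicit $\varepsilon/2$-ball argument is just a slightly more detailed version of the paper's one-line remark that $\ou_e$ lies in the interior of the ball for small $e$.
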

\begin{proof}
According to Theorem~\ref{ThmSSC}, $\ou$ is a local bang-bang
solution of problem~\eqref{OptConPro} via \eqref{GrOpCd} which holds
for some $\varepsilon>0$. For any parameter $e\in E$, by
Theorem~\ref{ThmAxExSol} problem~\eqref{PerturbPro} has global
solutions $\ou_e\in\mathcal{U}_{ad}\cap\oB^2_\varepsilon(\ou)$ with
respect to the $\varepsilon$. By Theorem~\ref{ThmSolSbL2},
$\ou_e\to\ou$ in $L^2(\Omega)$ as $e\to0$ in $E$. In particular,
$\ou_e$ belongs to the interior of the ball $\oB^2_\varepsilon(\ou)$
for small $e$. This shows that $\ou_e$ is a local solution of the
perturbed problem~\eqref{PerProWtCd} near $\ou$ for small $e$.
$\hfill\Box$
\end{proof}

\subsection{Stability of KKT points of perturbed problems}

For the perturbed problem~\eqref{PerProWtCd}, we call a control
$u_e\in\mathcal{U}_{ad}$ \emph{the KKT point} of \eqref{PerProWtCd}
if $u_e$ satisfies the following system
\begin{equation}\label{PertbStateEq}
\begin{cases}
\begin{aligned}
    Ay_{u_e+e_y}+f(x,y_{u_e+e_y})&=u_e+e_y\ &&\mbox{in}\ \Omega\\
                      y_{u_e+e_y}&=0        &&\mbox{on}\ \Gamma,
\end{aligned}
\end{cases}
\end{equation}
\begin{equation}\label{PertbAdjEq}
\begin{cases}
\begin{aligned}
    A^*\varphi_{u_e,e}+\dfrac{\partial f}{\partial y}(x,y_{u_e+e_y})\varphi_{u_e,e}
                     &=\dfrac{\partial L}{\partial y}(x,y_{u_e+e_y})+e_J\ &&\mbox{in}\ \Omega\\
    \varphi_{u_e,e}&=0                                                    &&\mbox{on}\ \Gamma,
\end{aligned}
\end{cases}
\end{equation}
\begin{equation}\label{PertbVarIneq}
    \int_\Omega\varphi_{u_e,e}(x)\big(u(x)-u_e(x)\big)dx\geq0,\ \forall u\in\mathcal{U}_{ad}.
\end{equation}
In formula \eqref{PertbAdjEq}, we call $\varphi_{u_e,e}$ the adjoint
state of $y_{u_e+e_y}$ for perturbed
problem~\eqref{PerProWtCd}-\eqref{PerStaEqWtCd}. Then, the partial
derivative of $\mJ(u,e)$ in $u$ at $u_e$ can be computed by
the formula
$$\mJ'_u(u_e,e)(u-u_e)=\int_\Omega\varphi_{u_e,e}(x)\big(u(x)-u_e(x)\big)dx,\ \forall u\in\mathcal{U}_{ad}.$$
Let us denote $\varphi_{u_e+e_y}$ the adjoint state of $y_{u_e+e_y}$
for problem~\eqref{OptConPro}-\eqref{StateEq} with respect to the
control $u_e+e_y\in L^2(\Omega)$, i.e., $\varphi_{u_e+e_y}$ is the
weak solution of the following equation
\begin{equation}\label{PertbEyAdEq}
\begin{cases}
\begin{aligned}
    A^*\varphi_{u_e+e_y}+\dfrac{\partial f}{\partial y}(x,y_{u_e+e_y})\varphi_{u_e+e_y}
                     &=\dfrac{\partial L}{\partial y}(x,y_{u_e+e_y})\ &&\mbox{in}\ \Omega\\
    \varphi_{u_e+e_y}&=0                                              &&\mbox{on}\ \Gamma.
\end{aligned}
\end{cases}
\end{equation}

Observe that since $\varphi_{u_e,e}$ and $\varphi_{u_e+e_y}$ are
respectively the weak solutions of \eqref{PertbAdjEq} and
\eqref{PertbEyAdEq}, we deduce that
$\varphi_{u_e,e}-\varphi_{u_e+e_y}$ satisfies the following equation
$$\begin{cases}
  \begin{aligned}
     A^*(\varphi_{u_e,e}-\varphi_{u_e+e_y})
     +\dfrac{\partial f}{\partial y}(x,y_{u_e+e_y})(\varphi_{u_e,e}-\varphi_{u_e+e_y})
                                        &=e_J\ &&\mbox{in}\ \Omega\\
     \varphi_{u_e,e}-\varphi_{u_e+e_y}  &=0    &&\mbox{on}\ \Gamma.
\end{aligned}
\end{cases}$$
Therefore, there exists a constant $c_J>0$ such that
\begin{equation}\label{EstPSlVphi}
    \|\varphi_{u_e,e}-\varphi_{u_e+e_y}\|_{L^\infty(\Omega)}\leq c_J\|e_J\|_{L^2(\Omega)}.
\end{equation}

We now state some auxiliary results that will be used in the proofs
of the main results in this section.

\begin{Lemma}\label{Lem25CaEx}
Let there be given any $\ou\in\mathcal{U}_{ad}$. Then, there exists
a constant $C_1>0$ such that
\begin{equation}\label{Cas25Ext}
    \|y_u-y_{\ou}\|_Y+\|\varphi_u-\varphi_{\ou}\|_Y\leq C_1\|u-\ou\|_{L^2(\Omega)},\
    \forall u\in L^2(\Omega),
\end{equation}
where $y_u$ and $\varphi_u$ are respectively the weak solutions of
\eqref{StateEq} and \eqref{AdjStaEq}.
\end{Lemma}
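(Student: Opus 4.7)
The idea is to derive the two bounds separately by subtracting the state (respectively adjoint) equations and applying standard linear elliptic estimates. The monotonicity $\frac{\partial f}{\partial y} \geq 0$ from \textbf{(A1)} is what makes the linearized equations well-posed with a coercivity constant depending only on $\lambda_A$.

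First I would treat $y_u - y_{\ou}$. Subtracting the state equation \eqref{StateEq} for $u$ and $\ou$, and using the mean value theorem in the second variable, one obtains
$$
A(y_u - y_{\ou}) + a(x)(y_u - y_{\ou}) = u - \ou \quad\mbox{in}\ \Omega,\qquad y_u - y_{\ou} = 0\ \mbox{on}\ \Gamma,
$$
with $a(x) = \int_0^1 \frac{\partial f}{\partial y}\bigl(x, y_{\ou} + t(y_u - y_{\ou})\bigr)\,dt \geq 0$. Testing with $y_u - y_{\ou}$ and using the ellipticity of $A$ together with $a \geq 0$ gives the $H^1_0$-bound by $\|u - \ou\|_{L^2(\Omega)}$; for the $L^\infty$-bound I would invoke the Stampacchia-type result quoted from \cite[Chapter 4]{Trolt10B}, using the embedding $L^2(\Omega) \hookrightarrow L^p(\Omega)$ for some $p>N/2$ (valid since $N\leq 3$). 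This yields
$$
\|y_u - y_{\ou}\|_Y \leq c \,\|u - \ou\|_{L^2(\Omega)}.
$$

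Next I would do the same for the adjoint state. Subtracting the equations \eqref{AdjStaEq} satisfied by $\varphi_u$ and $\varphi_{\ou}$, the difference $\varphi_u - \varphi_{\ou}$ solves
$$
A^*(\varphi_u - \varphi_{\ou}) + \frac{\partial f}{\partial y}(x, y_u)(\varphi_u - \varphi_{\ou}) = g,
$$
with zero boundary values, where
$$
g = \left[\frac{\partial L}{\partial y}(x, y_u) - \frac{\partial L}{\partial y}(x, y_{\ou})\right] - \left[\frac{\partial f}{\partial y}(x, y_u) - \frac{\partial f}{\partial y}(x, y_{\ou})\right]\varphi_{\ou}.
$$
Because $\ou \in \mathcal{U}_{ad}$ gives uniform bounds $\|y_{\ou}\|_\infty,\|\varphi_{\ou}\|_\infty \leq M$ (and analogous bounds for $y_u$ as soon as $u$ stays in a bounded set, with $M$ enlarged accordingly), the local Lipschitz parts of \textbf{(A1)} and \textbf{(A2)} together with the mean value theorem give
$$
|g(x)| \leq C_{L,M}\,|y_u - y_{\ou}| + C_{f,M}\,\|\varphi_{\ou}\|_\infty\,|y_u - y_{\ou}|,
$$
hence $\|g\|_{L^2(\Omega)} \leq C\,\|y_u - y_{\ou}\|_{L^2(\Omega)} \leq C'\,\|u - \ou\|_{L^2(\Omega)}$ by the first step. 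Since $\frac{\partial f}{\partial y}(\cdot,y_u) \geq 0$ by \textbf{(A1)}, the same elliptic regularity applied to $A^*$ yields $\|\varphi_u - \varphi_{\ou}\|_Y \leq c\,\|g\|_{L^2(\Omega)}$, and combining the two estimates produces the claimed constant $C_1$.

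The only delicate point I anticipate is the uniformity of the constants $C_{f,M}, C_{L,M}$ in $u$: these ultimately depend on an $L^\infty$-bound for $y_u$, which via the state equation depends on $\|u\|_{L^2(\Omega)}$. This is not an obstruction in the intended use, since the lemma will be applied with $u$ ranging in a bounded $L^2$-neighborhood of $\ou$ (cf.~$\mathcal{U}^\varepsilon_{ad}$), where $M$ may be chosen independently of $u$; the rest is a routine adaptation of \cite[Lemma~2.5]{Cas12SICON}.
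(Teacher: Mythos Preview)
Your proof is correct and follows essentially the same approach as the paper's: subtract the state equations, linearize via the mean value theorem to get a nonnegative potential, apply elliptic $H^1_0\cap L^\infty$ estimates, and then repeat for the adjoint. The only cosmetic difference is that in the adjoint step the paper keeps $\frac{\partial f}{\partial y}(x,y_{\ou})$ on the left and therefore gets $\varphi_u$ in the right-hand side (requiring a separate bound $\|\varphi_u\|_Y\le\ell_1$), whereas you keep $\frac{\partial f}{\partial y}(x,y_u)$ on the left and get $\varphi_{\ou}$ on the right, which is bounded outright since $\ou$ is fixed; your observation about the uniformity of $C_{f,M},C_{L,M}$ requiring $u$ to range in a bounded set is well taken and applies equally to the paper's argument.
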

\begin{proof}
For any $u\in L^2(\Omega)$, subtracting equations~\eqref{StateEq}
satisfied by $y_u$ and $y_{\ou}$ we obtain that
$$\begin{cases}
  \begin{aligned}
     A(y_u-y_{\ou})+f(x,y_u)-f(x,y_{\ou})&=u-\ou\ &&\mbox{in}\ \Omega\\
                              y_u-y_{\ou}&=0      &&\mbox{on}\ \Gamma.
\end{aligned}
\end{cases}$$
We see that
$$f(x,y_u(x))-f(x,y_{\ou}(x))
  =\int_0^1\frac{\partial f}{\partial y}\Big(x,y_u(x)+s\big(y_{\ou}(x)-y_u(x)\big)\Big)ds\big(y_u(x)-y_{\ou}(x)\big).$$
Due to assumption~\textbf{(A1)}, we have
$$\theta(x):=\int_0^1\frac{\partial f}{\partial y}\Big(x,y_u(x)+s\big(y_{\ou}(x)-y_u(x)\big)\Big)ds\geq0.$$
Let $\wy=y_u-y_{\ou}$ and $\wu=u-\ou$. Then, $\wy$ satisfies the
following equation
$$\begin{cases}
  \begin{aligned}
     A\wy+\theta(x)\wy&=\wu\ &&\mbox{in}\ \Omega\\
                   \wy&=0    &&\mbox{on}\ \Gamma,
\end{aligned}
\end{cases}$$
where $\theta(x)\geq0$. From this we can deduce that there exists a
constant $c_1>0$ such that
$$\|\wy\|_{H^1_0(\Omega)}+\|\wy\|_{L^\infty(\Omega)}\leq c_1\|\wu\|_{L^2(\Omega)},$$
or, equivalently, as follows
\begin{equation}\label{EstYuYou}
    \|y_u-y_{\ou}\|_Y\leq c_1\|u-\ou\|_{L^2(\Omega)}.
\end{equation}
Now, subtracting equations~\eqref{AdjStaEq} satisfied by $\varphi_u$
and $\varphi_{\ou}$ we obtain that
$$\begin{cases}
\begin{aligned}
    A^*(\varphi_u-\varphi_{\ou})+\dfrac{\partial f}{\partial y}(x,y_{\ou})(\varphi_u-\varphi_{\ou})
                           &=\dfrac{\partial L}{\partial y}(x,y_u)-\dfrac{\partial L}{\partial y}(x,y_{\ou})\\
                           &\quad-\bigg(\dfrac{\partial f}{\partial y}(x,y_u)
                                 -\dfrac{\partial f}{\partial y}(x,y_{\ou})\bigg)\varphi_u
                                                            \ &&\mbox{in}\ \Omega\\
    \varphi_u-\varphi_{\ou}&=0                                &&\mbox{on}\ \Gamma.
\end{aligned}
\end{cases}$$
This implies that $\|\varphi_u-\varphi_{\ou}\|_Y$ is estimated by
the $L^2(\Omega)$ norm of the right-hand side of the equation. Note
that $\|\varphi_u\|_Y\leq\ell_1$ for some constant $\ell_1>0$ and
$u\in L^2(\Omega)$. Using the assumptions \textbf{(A1)} and
\textbf{(A2)} and applying the mean value theorem we deduce that
$$\left|\dfrac{\partial L}{\partial y}(x,y_u)-\dfrac{\partial L}{\partial y}(x,y_{\ou})\right|
  +\left|\dfrac{\partial f}{\partial y}(x,y_u)-\dfrac{\partial f}{\partial y}(x,y_{\ou})\right|
  \leq(C_{L,M}+C_{f,M})\big|y_u(x)-y_{\ou}(x)\big|.$$
Arguing the same as in the proof of Theorem~\ref{ThmHldrEstm} we
obtain for some constant $D_{\alpha,\beta}>0$ that
$$\|y_u-y_{\ou}\|_{L^2(\Omega)}\leq D_{\alpha,\beta}\|u-\ou\|_{L^1(\Omega)}
  \leq|\Omega|^{1/2}D_{\alpha,\beta}\|u-\ou\|_{L^1(\Omega)}.$$
Consequently, we get the estimate
$$\begin{aligned}
    \left\|\dfrac{\partial L}{\partial y}(x,y_u)-\dfrac{\partial L}{\partial y}(x,y_{\ou})
           -\bigg(\dfrac{\partial f}{\partial y}(x,y_u)-\dfrac{\partial f}{\partial y}(x,y_{\ou})\bigg)\varphi_u
    \right\|_{L^2(\Omega)}
    \leq\ell_2\|u-\ou\|_{L^2(\Omega)},
\end{aligned}$$
where
$\ell_2:=(C_{L,M}+\ell_1C_{f,M})|\Omega|^{1/2}D_{\alpha,\beta}$.
Therefore, there exists a constant $c_2>0$ such that
\begin{equation}\label{EstVPuVPou}
    \|\varphi_u-\varphi_{\ou}\|_Y\leq c_2\|u-\ou\|_{L^2(\Omega)}.
\end{equation}
From \eqref{EstYuYou} and \eqref{EstVPuVPou} we have
$$\|y_u-y_{\ou}\|_Y+\|\varphi_u-\varphi_{\ou}\|_Y\leq C_1\|u-\ou\|_{L^2(\Omega)},$$
where $C_1=c_1+c_2$. $\hfill\Box$
\end{proof}

\begin{Theorem}\label{ThmStabKKT}
Assume that {\rm\textbf{(A1)}-\textbf{(A3)}} hold and let $\ou$ be a
local solution of problem~\eqref{OptConPro} under
assumption~{\rm\textbf{(A4.$\ae$)}} and condition~\eqref{SOrdCd} for
some $\delta>0$ and $\tau>0$. Then, there exist constants $\eta>0$
and $\varrho>0$ such that for any KKT point $u_e$ of
\eqref{PerProWtCd} with $\|u_e-\ou\|_{L^2(\Omega)}\leq\eta$, we have
$$\|u_e-\ou\|_{L^1(\Omega)}\leq\varrho\|e\|_E^\ae,$$
where $\ae$ is given in assumption~{\rm\textbf{(A4.$\ae$)}}.
\end{Theorem}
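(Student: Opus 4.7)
The plan is to combine the variational inequality \eqref{VarIneq} at $\ou$ with the perturbed KKT condition \eqref{PertbVarIneq} at $u_e$, estimate the resulting adjoint difference via Lemma~\ref{Lem25CaEx} and \eqref{EstPSlVphi}, and then apply the H\"older growth from Proposition~\ref{PropFstCd} to control $L:=\|u_e-\ou\|_{L^1(\Omega)}$.

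Testing \eqref{VarIneq} with $u=u_e$ gives $\int_\Omega\varphi_{\ou}(u_e-\ou)\,dx\ge 0$, while testing \eqref{PertbVarIneq} with $u=\ou$ gives $\int_\Omega\varphi_{u_e,e}(u_e-\ou)\,dx\le 0$. Dropping the latter non-positive term in the decomposition
$$J'(\ou)(u_e-\ou)=\int_\Omega\bigl(\varphi_{\ou}-\varphi_{u_e,e}\bigr)(u_e-\ou)\,dx+\int_\Omega\varphi_{u_e,e}(u_e-\ou)\,dx$$
yields $J'(\ou)(u_e-\ou)\le\int_\Omega(\varphi_{\ou}-\varphi_{u_e,e})(u_e-\ou)\,dx$. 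Proposition~\ref{PropFstCd} lower-bounds the left-hand side by $\kappa L^{1+1/\ae}$, while H\"older's inequality upper-bounds the right-hand side by $\|\varphi_{\ou}-\varphi_{u_e,e}\|_{L^\infty(\Omega)}L$. Cancelling one factor of $L$ (setting aside the trivial case $u_e=\ou$) leaves $\kappa L^{1/\ae}\le\|\varphi_{\ou}-\varphi_{u_e,e}\|_{L^\infty(\Omega)}$.

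For the adjoint difference I would split $\varphi_{\ou}-\varphi_{u_e,e}=(\varphi_{\ou}-\varphi_{u_e+e_y})+(\varphi_{u_e+e_y}-\varphi_{u_e,e})$. The second term is bounded by $c_J\|e_J\|_{L^2(\Omega)}$ via \eqref{EstPSlVphi}. For the first, Lemma~\ref{Lem25CaEx} combined with the embedding $Y\hookrightarrow L^\infty(\Omega)$ yields $\|\varphi_{\ou}-\varphi_{u_e+e_y}\|_{L^\infty(\Omega)}\le C_1(\|u_e-\ou\|_{L^2(\Omega)}+\|e_y\|_{L^2(\Omega)})$, and the pointwise bound $|u_e-\ou|\le\beta-\alpha$ produces the interpolation $\|u_e-\ou\|_{L^2(\Omega)}^2\le\|\beta-\alpha\|_{L^\infty(\Omega)}L$. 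Collecting everything gives
$$\kappa L^{1/\ae}\;\le\;C\,L^{1/2}+C'\|e\|_E.$$

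The main obstacle is the concluding algebraic absorption: since $\|u_e-\ou\|_{L^2(\Omega)}\le\eta$ forces $L\le|\Omega|^{1/2}\eta$, the term $C\,L^{1/2}$ can be absorbed into $(\kappa/2)L^{1/\ae}$ for $\eta$ sufficiently small, yielding $L\le\varrho\|e\|_E^\ae$ with $\varrho=(2C'/\kappa)^\ae$. This absorption is clean when $1/\ae<1/2$, since then $L^{1/2-1/\ae}\to 0$ as $L\to 0$; in the complementary regime one would replace Lemma~\ref{Lem25CaEx} by the sharper $L^1\to L^\infty$ regularity of the adjoint equation available in dimensions $N\le 3$ (valid because $\|y_u-y_{\ou}\|_{L^2(\Omega)}\le D\|u-\ou\|_{L^1(\Omega)}$ feeds into the $L^2\to L^\infty$ adjoint regularity), which gives the refined bound $\|\varphi_{\ou}-\varphi_{u_e+e_y}\|_{L^\infty(\Omega)}\le C(L+\|e_y\|_{L^1(\Omega)})$ and closes the argument in the remaining cases for $\eta$ chosen small enough.
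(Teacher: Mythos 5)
There is a genuine gap, and it sits exactly where you flag the ``main obstacle'': the absorption step fails in the relevant regime. Your strategy is purely first order: you discard the term $\int_\Omega\varphi_{u_e,e}(u_e-\ou)\,dx\le 0$ and bound everything by $\|\varphi_{\ou}-\varphi_{u_e,e}\|_{L^\infty(\Omega)}L$. The piece $\|\varphi_{\ou}-\varphi_{u_e}\|_{L^\infty(\Omega)}$ is a genuine second-order remainder: even with the sharper $L^1\to L^\infty$ adjoint estimate it is only $O(L)$, so your inequality reads $\kappa L^{1/\ae}\le CL+C'\|e\|_E$. Absorbing $CL$ into $\tfrac{\kappa}{2}L^{1/\ae}$ requires $CL^{1-1/\ae}\le\kappa/2$ for small $L$, which holds only if $\ae>1$ (and with the cruder $L^{1/2}$ bound only if $\ae>2$). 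But under assumption \textbf{(A4.$\ae$)} with a continuously differentiable adjoint state that actually vanishes somewhere, one necessarily has $\ae\le 1$; the benchmark case is $\ae=1$ of \textbf{(A4)}. For $\ae\le 1$ the inequality $\kappa L^{1/\ae}\le CL+C'\|e\|_E$ is satisfied automatically for all small $L$ whatever $e$ is, so it yields no stability estimate at all. Your closing sentence asserting that the refined adjoint regularity ``closes the argument in the remaining cases'' is therefore not correct.

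The missing ingredient is the second-order condition \eqref{SOrdCd}, which your proof never uses even though it is an explicit hypothesis of the theorem. The paper keeps half of the first-order margin: from Proposition~\ref{PropFstCd} it writes $J'(\ou)(u_e-\ou)\ge\tfrac{\kappa}{2}\|u_e-\ou\|_{L^1(\Omega)}^{1+1/\ae}+\tfrac12\int_\Omega|\varphi_{\ou}||u_e-\ou|\,dx$, identifies the curvature term via Taylor expansion, $\big(J'(u_e)-J'(\ou)\big)(u_e-\ou)=J''(\hu)(u_e-\ou)^2$, and then runs the splitting $u_e-\ou=v+w$ with $v\in C^\tau_{\ou}$ from the proof of Theorem~\ref{ThmSSC} to show
$$\big(J'(u_e)-J'(\ou)\big)(u_e-\ou)+\frac12\int_\Omega|\varphi_{\ou}||u_e-\ou|\,dx\ \ge\ \frac{\delta}{8}\|z_{u_e-\ou}\|_{L^2(\Omega)}^2\ \ge\ 0,$$
for $\|u_e-\ou\|_{L^2(\Omega)}\le\eta$ small. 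This is what eliminates the $O(L)$ remainder entirely, leaving $\tfrac{\kappa}{2}L^{1+1/\ae}\le c\|e\|_E\,L$ and hence $L\le\varrho\|e\|_E^{\ae}$ for every $\ae>0$. Your treatment of the perturbation itself (splitting $\varphi_{\ou}-\varphi_{u_e,e}$ through $\varphi_{u_e+e_y}$ and using \eqref{EstPSlVphi} with Lemma~\ref{Lem25CaEx}) matches the paper and is fine; what must be repaired is the handling of the term $\int_\Omega(\varphi_{\ou}-\varphi_{u_e})(u_e-\ou)\,dx$, which cannot be treated as a small error but must be cancelled using \eqref{SOrdCd}.
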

\begin{proof}
Let a parameter $e\in E$ be given and let $u_e$ be a KKT point of
\eqref{PerProWtCd}. We argue similarly to the proof of
Theorem~\ref{ThmSSC}. Due to Proposition \ref{PropFstCd} we get
$$\frac{\kappa}{2}\|u_e-\ou\|_{L^1(\Omega)}^{1+\frac{1}{\ae}}
  +\frac{1}{2}\int_\Omega|\varphi_{\ou}||u_e-\ou|dx
  \leq\big(J'(\ou)-\mJ'_u(u_e,e)\big)(u_e-\bar u).$$
By definition, we have
$$\mJ'_u(u_e,e)-J'(u_e)=\varphi_{u_e,e}-\varphi_{u_e}.$$
Recall the definitions of $\varphi_{u_e,e}$ and $\varphi_{u_e}$ in
\eqref{PertbAdjEq} and \eqref{AdjEq}. Due to \eqref{EstPSlVphi} and
\eqref{Cas25Ext}, we have
$$\begin{aligned}
    \|\varphi_{u_e,e}-\varphi_{u_e}\|_{L^\infty(\Omega)}
    &\leq\|\varphi_{u_e,e}-\varphi_{u_e+e_y}\|_{L^\infty(\Omega)}
     +\|\varphi_{u_e+e_y} - \varphi_{u_e}\|_{L^\infty(\Omega)}\\
    &\leq c(\|e_J\|_{L^2(\Omega)} + \|e_y\|_{L^2(\Omega)})\\
    &=c\|e\|_E
\end{aligned}$$
for some constant $c>0$. This proves
$$\frac{\kappa}{2}\|u_e-\ou\|_{L^1(\Omega)}^{1+\frac{1}{\ae}}+\frac{1}{2}\int_\Omega|\varphi_{\ou}||u_e-\ou|dx
  \leq\big(J'(\ou)-J'(u_e)\big)(u_e-\ou)+c\|e\|_E\|u_e-\bar u\|_{L^1(\Omega)}.$$
Making Taylor expansion, we find
$$\big(J'(u_e)-J'(\ou)\big)(u_e-\ou)=J''(\hu)(u_e-\bar u)^2$$
with $\hu=\ou+\theta(u_e-\ou)$ and $\theta\in (0,1)$. Introducing
the splitting $u_e-\bar u = v+w$ with $v\in C^\tau_{\bar u}$, see
again the proof of Theorem~\ref{ThmSSC}, we get
$$\big(J'(u_e)-J'(\ou)\big)(u_e-\ou)=J''(\ou)v^2+\big(J''(\hu)-J''(\ou)\big)v^2+J''(\hu)w^2+2J''(\hu)(v,w).$$
Proceeding exactly as in the proof of Theorem~\ref{ThmSSC}, we find
$$\big(J'(u_e)-J'(\ou)\big)(u_e-\ou)+\frac{1}{2}\int_\Omega|\varphi_{\ou}||u_e-\ou|dx
  \geq\frac{\delta}{8}\|z_{u_e-\ou}\|_{L^2(\Omega)}^2$$
for all $u_e$ with $\|u_e-\ou\|_{L^2(\Omega)}\leq\eta<\varepsilon$,
where $\varepsilon>0$ is as in the proof above and
$\eta\in(0,\varepsilon)$ is small enough. Putting everything
together, we obtain
$$\frac{\kappa}{2}\|u_e-\ou\|_{L^1(\Omega)}^{1+\frac{1}{\ae}}+\frac{\delta}{8}\|z_{u_e-\ou}\|_{L^2(\Omega)}^2
  \leq c\|e\|_E\|u_e-\ou\|_{L^1(\Omega)},$$
which shows
$$\|u_e-\ou\|_{L^1(\Omega)}\leq\varrho\|e\|_E^\ae,$$
where $\varrho:=2\kappa^{-1}c>0$. $\hfill\Box$
\end{proof}

\subsection{Stability of second-order conditions}

In this subsection, we will prove that the second-order sufficient
optimality condition \eqref{SOrdCd} is stable under small
perturbations. This stability result leads to a sufficient
optimality condition for the KKT points of the perturbed
problem~\eqref{PerProWtCd}.

\begin{Lemma}\label{Lem26CaEx}
Let us fix any $\ou\in\mathcal{U}_{ad}$. Then, there exist constants
$C_2>0$ and $C_3>0$ such that for every $u\in\mathcal{U}_{ad}$ and
$e_y\in L^2(\Omega)$ with $\|e_y\|_{L^2(\Omega)}$ small enough, we
have
$$\|z_{u+e_y,v}-z_v\|_Y\leq C_2\|u+e_y-\ou\|_{L^2(\Omega)}\|z_v\|_{L^2(\Omega)},\ \forall v\in L^2(\Omega),$$
$$\|z_{u+e_y,v}\|_{L^2(\Omega)}\leq C_3\|v\|_{L^1(\Omega)},\ \forall v\in L^1(\Omega),$$
where $z_{u+e_y,v}=G'(u+e_y)v$ and $z_v=G'(\ou)v$.
\end{Lemma}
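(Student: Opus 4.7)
\medskip
\textbf{Proof plan for Lemma~\ref{Lem26CaEx}.} The lemma is the analogue of Lemma~\ref{LemCas26}, with $u$ replaced by the possibly infeasible element $u+e_y$. The plan is therefore to repeat the structure of the proof of Lemma~\ref{LemCas26}, paying careful attention to the fact that $u+e_y$ need not lie in $\mathcal{U}_{ad}$, and to exhibit constants $C_2,C_3$ that are independent of $u\in\mathcal{U}_{ad}$ and of $e_y$ in a small $L^2$-ball around $0$.

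The first step is to establish a uniform $L^\infty$-bound $\|y_{u+e_y}\|_{C(\bar\Omega)}\leq M$ for all $u\in\mathcal{U}_{ad}$ and all $e_y$ with $\|e_y\|_{L^2(\Omega)}\leq\delta_0$, where $\delta_0>0$ is fixed. Since $\mathcal{U}_{ad}$ is bounded in $L^\infty(\Omega)$ and hence in $L^2(\Omega)$, and $N\leq 3$ guarantees $L^2(\Omega)\hookrightarrow L^p(\Omega)$ for some $p>N/2$ only marginally (we use instead the $L^2$-based regularity already exploited in Theorem~\ref{ThmAxExSol}), the estimate $\|y_{u+e_y}\|_{C(\bar\Omega)}\leq c\|u+e_y\|_{L^2(\Omega)}$ yields the desired uniform bound. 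From this and assumption~\textbf{(A1)} one deduces that $a_{u,e_y}(x):=\tfrac{\partial f}{\partial y}(x,y_{u+e_y}(x))$ satisfies $0\leq a_{u,e_y}\leq C_{f,M}$ uniformly.

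For the first inequality, subtract the equations defining $z_{u+e_y,v}$ and $z_v$ (both of the form \eqref{EqSolZuv}). The difference $\zeta:=z_{u+e_y,v}-z_v$ satisfies
\begin{equation*}
A\zeta+a_{u,e_y}(x)\,\zeta=\Big(\tfrac{\partial f}{\partial y}(x,y_{\ou})-\tfrac{\partial f}{\partial y}(x,y_{u+e_y})\Big)z_v\quad\text{in }\Omega,\qquad \zeta=0\ \text{on }\Gamma.
\end{equation*}
By the mean value theorem and assumption~\textbf{(A1)}, the right-hand side is pointwise bounded by $C_{f,M}|y_{u+e_y}-y_{\ou}|\,|z_v|$. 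Standard $H^1_0\cap C(\bar\Omega)$-estimates for equations with nonnegative potential (the same machinery underlying Lemma~\ref{Lem25CaEx}) give $\|\zeta\|_Y\leq c\,\|y_{u+e_y}-y_{\ou}\|_{L^\infty(\Omega)}\|z_v\|_{L^2(\Omega)}$, and a direct adaptation of Lemma~\ref{Lem25CaEx} (applied to the state equation with data $u+e_y$ versus $\ou$) yields $\|y_{u+e_y}-y_{\ou}\|_Y\leq C_1\|u+e_y-\ou\|_{L^2(\Omega)}$. Combining these gives the first claim.

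For the second inequality, I use a duality argument to convert the $L^1\to L^2$ estimate into a uniform $L^\infty$-bound on the dual state. Given $w\in L^2(\Omega)$ with $\|w\|_{L^2(\Omega)}\leq 1$, let $\varphi_w\in H^1_0(\Omega)\cap C(\bar\Omega)$ solve
\begin{equation*}
A^*\varphi_w+a_{u,e_y}(x)\,\varphi_w=w\quad\text{in }\Omega,\qquad \varphi_w=0\ \text{on }\Gamma.
\end{equation*}
Testing this equation by $z_{u+e_y,v}$ and the equation for $z_{u+e_y,v}$ by $\varphi_w$ gives
\begin{equation*}
\int_\Omega z_{u+e_y,v}\,w\,dx=\int_\Omega v\,\varphi_w\,dx\leq \|v\|_{L^1(\Omega)}\|\varphi_w\|_{L^\infty(\Omega)}.
\end{equation*}
Because $a_{u,e_y}\geq 0$ and bounded, standard Stampacchia $L^\infty$-estimates for the elliptic equation yield $\|\varphi_w\|_{L^\infty(\Omega)}\leq C\|w\|_{L^2(\Omega)}\leq C$ with $C$ depending only on $\Omega$, $A$, and the uniform bound $C_{f,M}$, hence independent of $u$ and $e_y$. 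Taking the supremum over $w$ gives the second claim with $C_3:=C$.

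The main obstacle is the uniformity of the constants: the coefficient $a_{u,e_y}$ depends on $u$ and $e_y$, so one has to verify that the $L^\infty$-bound on $y_{u+e_y}$, and consequently the maximum-principle constant for $A+a_{u,e_y}$, can be chosen independently of the pair $(u,e_y)$ in the admissible range. This reduces to the smallness assumption on $\|e_y\|_{L^2(\Omega)}$ and the $L^\infty$-boundedness of $\mathcal{U}_{ad}$, which together force $\|u+e_y\|_{L^2(\Omega)}$ to lie in a fixed bounded set. \hfill$\Box$
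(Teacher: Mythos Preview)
Your argument for the first inequality is essentially the paper's: subtract the linearized equations, apply the mean value theorem to the coefficient difference, and invoke the state estimate of Lemma~\ref{Lem25CaEx} (which, as you note, is stated for arbitrary $u\in L^2(\Omega)$ and so applies directly to $u+e_y$).

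For the second inequality you take a genuinely different route. The paper proceeds \emph{directly}: it quotes a regularity result for the linearized equation with $L^1$ right-hand side giving $\|z_{u+e_y,v}\|_{W^{1,p}_0(\Omega)}\leq C_p\|v\|_{L^1(\Omega)}$ for $p$ close to $N/(N-1)$, and then uses the Sobolev embedding $W^{1,p}_0(\Omega)\hookrightarrow L^2(\Omega)$. Your \emph{duality} argument instead transfers the estimate to the adjoint problem with $L^2$ data and appeals to the Stampacchia $L^\infty$-bound $\|\varphi_w\|_{L^\infty(\Omega)}\leq C\|w\|_{L^2(\Omega)}$, valid here since $2>N/2$ for $N\leq 3$. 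Both are correct and yield constants depending only on the uniform $L^\infty$-bound for $y_{u+e_y}$ (hence on $C_{f,M}$ and the operator $A$). The paper's approach is a one-line citation of an existing regularity lemma; your approach is more self-contained and avoids invoking $W^{1,p}$ theory for $L^1$ data, at the price of introducing an auxiliary adjoint problem. Either way the uniformity in $(u,e_y)$ reduces, as you correctly identify, to the bound on $\|y_{u+e_y}\|_{C(\bar\Omega)}$, which follows from continuity of $G:L^2(\Omega)\to Y$ on the bounded set $\mathcal{U}_{ad}+\{e_y:\|e_y\|_{L^2(\Omega)}\leq\delta_0\}$.
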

\begin{proof}
Subtracting equations~\eqref{EqSolZuv} satisfied by $z_{u+e_y,v}$
and $z_v$ we obtain
$$A(z_{u+e_y,v}-z_v)+\frac{\partial f}{\partial y}(x,y_{u+e_y})(z_{u+e_y,v}-z_v)
  +\bigg(\frac{\partial f}{\partial y}(x,y_{u+e_y})-\frac{\partial f}{\partial y}(x,y_{\ou})\bigg)z_v=0,$$
or, equivalently, as follows
$$A(z_{u+e_y,v}-z_v)+\frac{\partial f}{\partial y}(x,y_{u+e_y})(z_{u+e_y,v}-z_v)
  +\frac{\partial^2f}{\partial y^2}(x,\hy)(y_{u+e_y}-y_{\ou})z_v=0,$$
where $\hy=y_{\ou}+\theta(y_{u+e_y}-y_{\ou})$ for some measurable
function $\theta(\cdot)$ with $0\leq\theta(x)\leq1$. Combining this
with \eqref{EstSolEqSt}, assumption~\textbf{(A1)}, and
\eqref{Cas25Ext} we deduce that
$$\begin{aligned}
    \|z_{u+e_y,v}-z_v\|_Y
    &\leq C_{f,M}\|y_{u+e_y,v}-y_{\ou}\|_{L^2(\Omega)}\|z_v\|_{L^2(\Omega)}\\
    &\leq C_{f,M}D_{\alpha,\beta}\|u+e_y-\ou\|_{L^1(\Omega)}\|z_v\|_{L^2(\Omega)}\\
    &\leq C_2\|u+e_y-\ou\|_{L^2(\Omega)}\|z_v\|_{L^2(\Omega)},
\end{aligned}$$
where $C_2=C_{f,M}D_{\alpha,\beta}|\Omega|^{1/2}$.

Arguing similarly as in the proof of \cite[Lemma~2.6]{Cas12SICON} we
can use a regularity result for equation~\eqref{EqSolZuv} to obtain
the inequality
\begin{equation}\label{EstW1p0L1}
    \|z_{u+e_y,v}\|_{W^{1,p}_0(\Omega)}\leq C_p\|v\|_{L^1(\Omega)},
\end{equation}
where $C_p$ is independent of $u+e_y$ since $\mathcal{U}_{ad}$ is
bounded in $L^\infty(\Omega)$. Moreover, we can take $p$ close
enough to $N/(N-1)$ to have the embedding
$W^{1,p}_0(\Omega)\hookrightarrow L^2(\Omega)$. From this and
\eqref{EstW1p0L1} it follows that $\|z_{u+e_y,v}\|_{L^2(\Omega)}\leq
C_3\|v\|_{L^1(\Omega)}$ for some constant $C_3>0$. $\hfill\Box$
\end{proof}

\begin{Lemma}\label{Lem27CaEx}
Let us fix any $\ou\in\mathcal{U}_{ad}$. For every $\varepsilon>0$,
there exists $\rho>0$ such that for any $u\in\mathcal{U}_{ad}$ and
$e_y\in L^2(\Omega)$ with $\|e_y\|_{L^2(\Omega)}$ small enough and
$\|u+e_y-\ou\|_{L^2(\Omega)}\leq\rho$ the following inequality holds
\begin{equation}\label{Cas27Ext}
    \big|J''(u+e_y)v^2-J''(\ou)v^2\big|\leq\varepsilon\|z_v\|^2_{L^2(\Omega)},\ \forall v\in L^2(\Omega).
\end{equation}
\end{Lemma}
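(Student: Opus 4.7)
My plan is to adapt the proof of \cite[Lemma~2.7]{Cas12SICON} (i.e., Lemma~\ref{LemCas27}) to the perturbed setting, using the extended estimates from Lemma~\ref{Lem26CaEx} that already accommodate the parameter $e_y$. The starting point is the representation in \eqref{SDeriCost}: introducing
$$F(x,u+e_y)=\frac{\partial^2L}{\partial y^2}\big(x,y_{u+e_y}\big)-\varphi_{u+e_y}\frac{\partial^2f}{\partial y^2}\big(x,y_{u+e_y}\big),$$
with the analogous expression $F(x,\ou)$ for $u=\ou$, $e_y=0$, so that
$$J''(u+e_y)v^2-J''(\ou)v^2=\int_\Omega F(x,u+e_y)\big(z_{u+e_y,v}^2-z_v^2\big)dx+\int_\Omega\big(F(x,u+e_y)-F(x,\ou)\big)z_v^2dx.$$
The first step is to bound the two integrals separately by quantities of the form $\varepsilon\|z_v\|^2_{L^2(\Omega)}$ when $\rho$ and $\|e_y\|_{L^2(\Omega)}$ are small enough.

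For the first integral, I would factor $z_{u+e_y,v}^2-z_v^2=(z_{u+e_y,v}-z_v)(z_{u+e_y,v}+z_v)$. Because $\|\varphi_u\|_{L^\infty(\Omega)}$ and $\|y_u\|_{L^\infty(\Omega)}$ are bounded uniformly over $\mathcal{U}_{ad}$ (and the perturbed states stay bounded for $\|e_y\|_{L^2(\Omega)}$ small), assumptions \textbf{(A1)}--\textbf{(A2)} give a uniform bound $\|F(\cdot,u+e_y)\|_{L^\infty(\Omega)}\leq K_M$. Then Lemma~\ref{Lem26CaEx} yields
$$\|z_{u+e_y,v}-z_v\|_{L^2(\Omega)}\leq C_2\|u+e_y-\ou\|_{L^2(\Omega)}\|z_v\|_{L^2(\Omega)}\leq C_2\rho\|z_v\|_{L^2(\Omega)},$$
and together with the triangle inequality this controls the first integral by $K_M(2C_2\rho+C_2^2\rho^2)\|z_v\|^2_{L^2(\Omega)}$, which is $\leq \varepsilon/2\cdot\|z_v\|^2_{L^2(\Omega)}$ for $\rho$ small.

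For the second integral, I need $\|F(\cdot,u+e_y)-F(\cdot,\ou)\|_{L^\infty(\Omega)}\to 0$ as $\|u+e_y-\ou\|_{L^2(\Omega)}\to 0$. Lemma~\ref{Lem25CaEx} gives
$$\|y_{u+e_y}-y_{\ou}\|_{L^\infty(\Omega)}+\|\varphi_{u+e_y}-\varphi_{\ou}\|_{L^\infty(\Omega)}\leq C_1\|u+e_y-\ou\|_{L^2(\Omega)},$$
so the states and adjoint states converge uniformly. Combining this with the uniform continuity statements on $\partial^2 L/\partial y^2$ and $\partial^2 f/\partial y^2$ in the second variable (last parts of \textbf{(A1)} and \textbf{(A2)}), together with the boundedness $\|\partial^2 f/\partial y^2(x,y_{\ou})\|_{L^\infty}\leq C_{f,M}$ and $\|\varphi_{u+e_y}-\varphi_{\ou}\|_{L^\infty}$ going to zero, one shows $\|F(\cdot,u+e_y)-F(\cdot,\ou)\|_{L^\infty(\Omega)}\to 0$ as $\rho\to 0$. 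Bounding the second integral by this $L^\infty$-norm times $\|z_v\|^2_{L^2(\Omega)}$, one can force it to be $\leq\varepsilon/2\cdot\|z_v\|^2_{L^2(\Omega)}$ for $\rho$ small.

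The main technical point, and the only real obstacle, is ensuring that both the perturbation bounds from Lemma~\ref{Lem25CaEx}/\ref{Lem26CaEx} and the uniform continuity of $\partial^2 L/\partial y^2$, $\partial^2 f/\partial y^2$ translate into an $L^\infty$-bound on the difference $F(\cdot,u+e_y)-F(\cdot,\ou)$; this requires applying the continuity moduli from \textbf{(A1)}--\textbf{(A2)} pointwise while keeping $y_{u+e_y}$ in a uniform $L^\infty$-ball, which in turn requires $\|e_y\|_{L^2(\Omega)}$ to be small a priori so that \eqref{EstSolEqSt}-type bounds remain valid for the perturbed state. Once this uniform passage is established, choosing $\rho$ (and implicitly the threshold on $\|e_y\|_{L^2(\Omega)}$) small enough makes both halves of the splitting below $\varepsilon/2\cdot\|z_v\|^2_{L^2(\Omega)}$, yielding \eqref{Cas27Ext}.
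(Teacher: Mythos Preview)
Your proposal is correct and follows essentially the same route as the paper's proof: the paper defines the same function $F$, uses \textbf{(A1)}--\textbf{(A2)} together with Lemma~\ref{Lem25CaEx} to get $\|F(\cdot,u+e_y)-F(\cdot,\ou)\|_{L^\infty(\Omega)}\leq\varepsilon/2$ for $\|u+e_y-\ou\|_{L^2(\Omega)}$ small, records the uniform bound $\|F(\cdot,u+e_y)\|_{L^\infty}\leq K_M$, and then invokes Lemma~\ref{Lem26CaEx} and the argument of \cite[Lemma~2.7]{Cas12SICON} to finish. Your write-up simply makes explicit the splitting into the two integrals and the factoring $z_{u+e_y,v}^2-z_v^2=(z_{u+e_y,v}-z_v)(z_{u+e_y,v}+z_v)$ that the paper leaves implicit in its reference to \cite{Cas12SICON}.
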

\begin{proof}
Let us define the function $F:\Omega\times\mathcal{U}_{ad}\to
L^\infty(\Omega)$ by setting
$$F(x,u)=\frac{\partial^2L}{\partial y^2}(x,y_u)-\varphi_u\frac{\partial^2f}{\partial y^2}(x,y_u),$$
where $y_u$ and $\varphi_u$ are respectively the weak solutions of
\eqref{StateEq} and \eqref{AdjStaEq}. Using the assumptions
\textbf{(A1)} and \textbf{(A2)}, and \eqref{Cas25Ext} we deduce that
for any $\varepsilon>0$ there exists $\rho_1\in(0,1)$ such that
$$\|F(x,u+e_y)-F(x,\ou)\|_{L^\infty(\Omega)}\leq\frac{\varepsilon}{2}\quad\mbox{whenever}\quad
  \|u+e_y-\ou\|_{L^2(\Omega)}\leq\rho_1.$$
We also have
$$\|F(x,u+e_y)\|_\infty\leq K_M,\ \forall u\in\mathcal{U}_{ad}.$$
Using the above estimates for $F$ together with
Lemma~\ref{Lem26CaEx} and arguing similarly as in the proof of
\cite[Lemma~2.7]{Cas12SICON} we obtain \eqref{Cas27Ext}.
$\hfill\Box$
\end{proof}

\medskip
According to \cite[Theorem~2.4]{Cas12SICON}, under assumptions
{\rm\textbf{(A1)}-\textbf{(A3)}}, if a feasible control $\ou$ of
problem~\eqref{OptConPro} satisfies condition~\eqref{SOrdCd} for
some $\delta>0$ and $\tau>0$, then it is a local solution of
\eqref{OptConPro}. The following theorem shows that
condition~\eqref{SOrdCd} is stable for small parameter $e$.

\begin{Theorem}\label{ThmPerbSSC}
Assume that {\rm\textbf{(A1)}-\textbf{(A3)}} hold and let
$\ou\in\mathcal{U}_{ad}$ be such that condition~\eqref{SOrdCd} holds
at $\ou$ for some $\delta>0$ and $\tau>0$. Then, there exist
$\widehat{\delta}>0$ and $\rho_1>0$ such that for $e\in E$ small
enough and $\|u+e_y-\ou\|_{L^2(\Omega)}\leq\rho_1$ we have
\begin{equation}\label{PerbSOrdCd}
    \mJ''_u(u,e)v^2\geq\widehat{\delta}\|z^e_{u,v}\|^2_{L^2(\Omega)},\
    \forall v\in C^\tau_{\ou},
\end{equation}
where $z^e_{u,v}=G'(u+e_y)v$ is the solution of \eqref{EqSolZuv} for
$y=y_{u+e_y}$.
\end{Theorem}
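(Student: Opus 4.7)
The proof follows the template of Theorem~\ref{ThmSSC}, upgrading the second-order growth from the unperturbed cost $J$ to the perturbed cost $\mJ$. The plan has three pieces: expand $\mJ''_u(u,e)v^2$; lower-bound the ``main'' piece $J''(u+e_y)v^2$ using Lemmas~\ref{Lem26CaEx} and~\ref{Lem27CaEx}; and control the ``perturbation'' piece $(e_J, G''(u+e_y)(v,v))_{L^2(\Omega)}$ by $\|e_J\|_{L^2(\Omega)}\|z^e_{u,v}\|^2_{L^2(\Omega)}$ via a duality argument.

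First I would differentiate $\mJ(u,e) = J(u+e_y) + (e_J, y_{u+e_y})_{L^2(\Omega)}$ twice in $u$ using the chain rule and the $\mathcal{C}^2$-regularity of the control-to-state map. This gives
$$\mJ''_u(u,e)v^2 = J''(u+e_y)v^2 + \bigl(e_J,\, G''(u+e_y)(v,v)\bigr)_{L^2(\Omega)}.$$
For the first summand, Lemma~\ref{Lem27CaEx} with tolerance $\varepsilon_1 \in (0,\delta)$ produces $\rho_1 > 0$ such that $J''(u+e_y)v^2 \geq J''(\ou)v^2 - \varepsilon_1 \|z_v\|^2_{L^2(\Omega)}$ whenever $\|u+e_y - \ou\|_{L^2(\Omega)} \leq \rho_1$, and restricting $v$ to $C^\tau_{\ou}$ combined with \eqref{SOrdCd} yields $J''(u+e_y)v^2 \geq (\delta-\varepsilon_1)\|z_v\|^2_{L^2(\Omega)}$. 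Lemma~\ref{Lem26CaEx} lets me replace $z_v$ by $z^e_{u,v}$ with an error factor close to $1$ after shrinking $\rho_1$ so that $C_2\rho_1 \leq 1/2$, giving $\|z_v\|^2_{L^2(\Omega)} \geq \frac{1}{4}\|z^e_{u,v}\|^2_{L^2(\Omega)}$.

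The real work is the perturbation bound
$$\bigl|(e_J,\, G''(u+e_y)(v,v))_{L^2(\Omega)}\bigr| \leq C\,\|e_J\|_{L^2(\Omega)}\,\|z^e_{u,v}\|^2_{L^2(\Omega)}$$
for a constant $C$ independent of $u$, $e$ and $v$. Setting $w := G''(u+e_y)(v,v)$, the PDE \eqref{EqSolSeGvv} reads $Aw + \frac{\partial f}{\partial y}(x,y_{u+e_y})w = -\frac{\partial^2 f}{\partial y^2}(x,y_{u+e_y})(z^e_{u,v})^2$. A direct elliptic regularity bound on $w$ would produce $\|z^e_{u,v}\|^2_{L^4(\Omega)}$ on the right, which is too weak for the theorem. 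To reach the sharp $L^2$-norm I would introduce the adjoint state $\phi \in H^1_0(\Omega)\cap C(\bar\Omega)$ solving $A^*\phi + \frac{\partial f}{\partial y}(x,y_{u+e_y})\phi = e_J$ in $\Omega$, $\phi = 0$ on $\Gamma$. Testing the equation for $w$ against $\phi$ and the equation for $\phi$ against $w$, the self-adjoint leading terms cancel and
$$(e_J, w)_{L^2(\Omega)} = -\int_\Omega \phi(x)\,\frac{\partial^2 f}{\partial y^2}(x,y_{u+e_y})\,(z^e_{u,v}(x))^2\,dx.$$
Since $N \leq 3$, the datum $e_J \in L^2(\Omega)$ satisfies $p = 2 > N/2$, so the regularity cited in Section~2 gives $\|\phi\|_{L^\infty(\Omega)} \leq c\,\|e_J\|_{L^2(\Omega)}$; combined with the uniform bound $|\partial^2 f/\partial y^2| \leq C_{f,M}$ from \textbf{(A1)}, this delivers the claimed estimate.

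Assembling the pieces one obtains
$$\mJ''_u(u,e)v^2 \geq \frac{\delta-\varepsilon_1}{4}\,\|z^e_{u,v}\|^2_{L^2(\Omega)} - C\,\|e_J\|_{L^2(\Omega)}\,\|z^e_{u,v}\|^2_{L^2(\Omega)},$$
and the proof ends by taking $\|e\|_E$ small enough (so that $C\|e_J\|_{L^2(\Omega)} \leq (\delta-\varepsilon_1)/8$) to obtain \eqref{PerbSOrdCd} with $\widehat{\delta} := (\delta-\varepsilon_1)/8$. The main obstacle is the duality identity: it is what pins the perturbation estimate to the sharp norm $\|z^e_{u,v}\|^2_{L^2(\Omega)}$ instead of $\|z^e_{u,v}\|^2_{L^4(\Omega)}$ or $\|v\|^2_{L^2(\Omega)}$, either of which would prevent absorption into the main term when $v$ is an arbitrary direction in $C^\tau_{\ou}$.
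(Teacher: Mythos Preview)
Your proof follows the same overall structure as the paper's: decompose $\mJ''_u(u,e)v^2 = J''(u+e_y)v^2 + (e_J, G''(u+e_y)v^2)_{L^2(\Omega)}$, handle the first summand via Lemma~\ref{Lem27CaEx} together with \eqref{SOrdCd}, and convert between $\|z_v\|_{L^2(\Omega)}$ and $\|z^e_{u,v}\|_{L^2(\Omega)}$ via Lemma~\ref{Lem26CaEx}. The one substantive difference is the treatment of the perturbation term. The paper estimates $\|G''(u+e_y)v^2\|_{L^2(\Omega)}$ directly by elliptic regularity applied to \eqref{EqSolSeGvv}, arriving at $\|G''(u+e_y)v^2\|_{L^2(\Omega)} \leq CC_{f,M}\|z^e_{u,v}\|^2_{L^2(\Omega)}$ and then uses Cauchy--Schwarz against $e_J$; you instead introduce an auxiliary adjoint $\phi$ solving $A^*\phi + \partial_y f(\cdot,y_{u+e_y})\phi = e_J$, derive the identity $(e_J,w)_{L^2(\Omega)} = -\int_\Omega \phi\,\partial_{yy}f\,(z^e_{u,v})^2\,dx$, and use $\|\phi\|_{L^\infty(\Omega)}\leq c\|e_J\|_{L^2(\Omega)}$. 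Your duality argument is cleaner on exactly the point you flag: it produces the factor $\|z^e_{u,v}\|^2_{L^2(\Omega)}$ directly, whereas the paper's chain passes from $\|(z^e_{u,v})^2\|_{L^2(\Omega)}=\|z^e_{u,v}\|^2_{L^4(\Omega)}$ to $\|z^e_{u,v}\|^2_{L^2(\Omega)}$ without comment. The final assembly and the choice of $\widehat\delta$ differ only in constants.
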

\begin{proof}
Recall that
$\mJ(u,e)=J(u+e_y)+(e_J,y_{u+e_y})_{L^2(\Omega)}$ with
$e=(e_J,e_y)\in E$. It follows that
$$\mJ''_u(u,e)v^2=J''(u+e_y)v^2+\big(e_J,G''(u+e_y)v^2\big)_{L^2(\Omega)},\
  \forall v\in L^2(\Omega).$$
By Lemma~\ref{Lem27CaEx}, for any $\varepsilon_1>0$, there exists
$\rho_1>0$ such that if $\|u+e_y-\ou\|_{L^2(\Omega)}\leq\rho_1$, we
have
$$\big|J''(u+e_y)v^2-J''(\ou)v^2\big|\leq\varepsilon_1\|z_v\|^2_{L^2(\Omega)},\ \forall v\in L^2(\Omega).$$
We can choose $\varepsilon_1<\delta$ and choose small $e$ satisfying
$\|u+e_y-\ou\|_{L^2(\Omega)}\leq\rho_1$. Then, it holds that
$$\begin{aligned}
    J''(u+e_y)v^2
    &\geq J''(\ou)v^2-\varepsilon_1\|z_v\|^2_{L^2(\Omega)}\\
    &\geq\delta\|z_v\|^2_{L^2(\Omega)}-\varepsilon_1\|z_v\|^2_{L^2(\Omega)}
     =(\delta-\varepsilon_1)\|z_v\|^2_{L^2(\Omega)}.
\end{aligned}$$
Hence, for every $v\in L^2(\Omega)$, we have
\begin{equation}\label{PerbSeDeJ}
    \mJ''_u(u,e)v^2
    \geq(\delta-\varepsilon_1)\|z_v\|^2_{L^2(\Omega)}+\big(e_J,G''(u+e_y)v^2\big)_{L^2(\Omega)}.
\end{equation}
Note that $G''(u+e_y)v^2$ is a weak solution of \eqref{EqSolSeGvv}
satisfying the condition for some constant $C\geq0$ as follows
\begin{equation}\label{PerbEsSeG}
\begin{aligned}
    \|G''(u+e_y)v^2\|_{L^2(\Omega)}
    &\leq C\left\|-\frac{\partial^2f}{\partial y^2}(x,y_{u+e_y})z_{u+e_y,v}^2\right\|_{L^2(\Omega)}\\
    &\leq C\left\|\frac{\partial^2f}{\partial y^2}(x,y_{u+e_y})\right\|_{L^\infty(\Omega)}
          \|z_{u+e_y,v}\|_{L^2(\Omega)}^2
     \leq CC_{f,M}\|z^e_{u,v}\|_{L^2(\Omega)}^2,
\end{aligned}
\end{equation}
where $z^e_{u,v}=z_{u+e_y,v}=G'(u+e_y)v$. In addition, by
Lemma~\ref{Lem26CaEx}, we have
$$\|z^e_{u,v}-z_v\|_{L^2(\Omega)}\leq C_2\|u+e_y-\ou\|_{L^2(\Omega)}\|z_v\|_{L^2(\Omega)}.$$
Hence, for $e$ small enough, we deduce that
\begin{equation}\label{EsZuev}
\begin{aligned}
    \|z^e_{u,v}\|_{L^2(\Omega)}
    &\leq\|z_v\|_{L^2(\Omega)}+C_2\|u+e_y-\ou\|_{L^2(\Omega)}\|z_v\|_{L^2(\Omega)}\\
    &\leq\big(1+C_2\|u+e_y-\ou\|_{L^2(\Omega)}\big)\|z_v\|_{L^2(\Omega)}\\
    &\leq(1+C_2\rho_1)\|z_v\|_{L^2(\Omega)}.
\end{aligned}
\end{equation}
Combining this with \eqref{PerbSeDeJ}, \eqref{PerbEsSeG}, and
\eqref{EsZuev} we get
$$\begin{aligned}
    \mJ''_u(u,e)v^2
    &\geq(\delta-\varepsilon_1)\|z_v\|^2_{L^2(\Omega)}-\|e_J\|_{L^2(\Omega)}\|G''(u+e_y)v^2\|_{L^2(\Omega)}\\
    &\geq(\delta-\varepsilon_1)(1+C_2\rho_1)^{-1}\|z^e_{u,v}\|^2_{L^2(\Omega)}
     -\|e_J\|_{L^2(\Omega)}CC_{f,M}\|z^e_{u,v}\|_{L^2(\Omega)}^2\\
    &=\big((\delta-\varepsilon_1)(1+C_2\rho_1)^{-1}-\|e_J\|_{L^2(\Omega)}CC_{f,M}\big)\|z^e_{u,v}\|_{L^2(\Omega)}^2.
\end{aligned}$$
We have shown that
\begin{equation}\label{TemEsSeJ}
    \mJ''_u(u,e)v^2\geq\widehat{\delta}\|z^e_{u,v}\|_{L^2(\Omega)}^2,\ \forall v\in C^\tau_{\ou},
\end{equation}
where
$\widehat{\delta}:=(\delta-\varepsilon_1)(1+C_2\rho_1)^{-1}-\|e_J\|_{L^2(\Omega)}CC_{f,M}>0$
for $e$ small enough. $\hfill\Box$
\end{proof}

\medskip
Let us mention that it is an open problem to prove that a
second-order condition is fulfilled with respect to the critical
cone to $u_e$, i.e., replace $C^\tau_{\bar u}$ by $C^\tau_{u_e}$ in
\eqref{SOrdCdKKT}. If one assumes such a condition, then KKT points
of the perturbed problem are indeed local solutions.

\begin{Theorem}
Assume that {\rm\textbf{(A1)}-\textbf{(A3)}} hold and that $u_e$ is
a KKT point of problem~\eqref{PerProWtCd} such that there exist
$\delta>0$ and $\tau>0$ such that
\begin{equation}\label{SOrdCdKKT}
    J''(u_e)v^2\geq\delta\|z_{u_e,v}\|^2_{L^2(\Omega)},\ \forall v\in C^\tau_{u_e},
\end{equation}
where $z_{u_e,v}=G'(u_e)v$ is the solution of \eqref{EqSolZuv} for
$y=y_{u_e}$. Then, there exist $\eta>0$ and $\widehat{\delta}>0$
such that
\begin{equation}\label{GrSffCod}
\begin{aligned}
    \mJ(u_e,e)+\frac{3\widehat{\delta}}{16}\|z^e_{\wu,u-u_e}\|^2_{L^2(\Omega)}\leq\mJ(u,e),~
    \forall u\in\mathcal{U}_{ad}\cap\oB^2_\eta(u_e),
\end{aligned}
\end{equation}
where $\wu=u_e+\theta(u-u_e)$ for some $\theta\in(0,1)$ and
$z^e_{\wu,u-u_e}=G'(\wu+e_y)(u-u_e)$.
\end{Theorem}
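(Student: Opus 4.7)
The plan is to adapt the proof of Theorem~\ref{ThmSSC} to the present setting, with the reference control $\bar u$ replaced by the KKT point $u_e$ and the functional $J$ replaced by the perturbed functional $\mJ(\cdot,e)$. The first step is to invoke Theorem~\ref{ThmPerbSSC} with $u_e$ playing the role of $\bar u$, observing that its proof and the auxiliary Lemmas~\ref{Lem26CaEx} and~\ref{Lem27CaEx} remain valid for any reference point in $\mathcal{U}_{ad}$. This yields constants $\widehat\delta>0$ and $\rho_1>0$ such that, for $\|e\|_E$ small enough and every $\wu$ with $\|\wu+e_y-u_e\|_{L^2(\Omega)}\leq\rho_1$,
$$\mJ''_u(\wu,e)v^2 \geq \widehat\delta\,\|z^e_{\wu,v}\|^2_{L^2(\Omega)}, \quad \forall v\in C^\tau_{u_e}.$$

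Next, for $u\in\mathcal{U}_{ad}\cap\oB^2_\eta(u_e)$ with $\eta>0$ sufficiently small, the second-order Taylor expansion gives
$$\mJ(u,e) = \mJ(u_e,e) + \mJ'_u(u_e,e)(u-u_e) + \tfrac{1}{2}\mJ''_u(\wu,e)(u-u_e)^2,$$
with $\wu=u_e+\theta(u-u_e)$ for some $\theta\in(0,1)$. The KKT condition~\eqref{PertbVarIneq} forces the pointwise complementarity $\varphi_{u_e,e}(u-u_e)=|\varphi_{u_e,e}|\,|u-u_e|$ a.e., so the first-order term equals $\int_\Omega|\varphi_{u_e,e}|\,|u-u_e|\,dx$. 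I then introduce the splitting $u-u_e=v+w$ where $v=u-u_e$ on $\{|\varphi_{u_e}|\leq\tau\}$ and $v=0$ elsewhere; the sign constraints defining $v\in C^\tau_{u_e}$ hold automatically from $u,u_e\in\mathcal{U}_{ad}$, and the vanishing on $\{|\varphi_{u_e}|>\tau\}$ is by construction. Combining~\eqref{EstPSlVphi} with Lemma~\ref{Lem25CaEx} yields $\|\varphi_{u_e,e}-\varphi_{u_e}\|_{L^\infty(\Omega)}\leq c\|e\|_E$, so for $\|e\|_E$ small enough one has $|\varphi_{u_e,e}|\geq\tau/2$ on the support $\{|\varphi_{u_e}|>\tau\}$ of $w$, whence $\mJ'_u(u_e,e)(u-u_e)\geq\tfrac{\tau}{2}\|w\|_{L^1(\Omega)}$.

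For the quadratic term, I expand $\mJ''_u(\wu,e)(u-u_e)^2=\mJ''_u(\wu,e)v^2+2\mJ''_u(\wu,e)(v,w)+\mJ''_u(\wu,e)w^2$ and apply the perturbed SSC to the $v^2$ contribution. The cross and $w^2$ contributions are controlled by Cauchy--Schwarz in the integral representation of $\mJ''_u$: the kernel differs from that of $J''$ only by a term involving $e_J$ and $G''$, whose $L^\infty$-norm is uniformly bounded thanks to Lemma~\ref{Lem26CaEx} and the bound~\eqref{PerbEsSeG}. The $L^1$-$L^2$ regularity trick from the proof of Theorem~\ref{ThmSSC}, in its perturbed version based on Lemma~\ref{Lem26CaEx}, gives an estimate of the form $\|z^e_{\wu,w}\|^2_{L^2(\Omega)}\leq C_3^2\eta\sqrt{|\Omega|}\|w\|_{L^1(\Omega)}$, so after shrinking $\eta$ the quantity $\tfrac{\tau}{2}\|w\|_{L^1(\Omega)}$ dominates the cross and $w^2$ mismatches while still leaving a positive multiple of $\|z^e_{\wu,w\,}\|^2_{L^2(\Omega)}$. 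Combining with the SSC lower bound on the $v^2$ term yields
$$\mJ(u,e) \geq \mJ(u_e,e) + \tfrac{3\widehat\delta}{8}\bigl(\|z^e_{\wu,v}\|^2_{L^2(\Omega)}+\|z^e_{\wu,w}\|^2_{L^2(\Omega)}\bigr),$$
and the elementary inequality $\|z^e_{\wu,u-u_e}\|^2_{L^2(\Omega)}\leq 2\bigl(\|z^e_{\wu,v}\|^2_{L^2(\Omega)}+\|z^e_{\wu,w}\|^2_{L^2(\Omega)}\bigr)$ then produces the announced coefficient $3\widehat\delta/16$ in~\eqref{GrSffCod}.

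The main obstacle I anticipate is the simultaneous calibration of the smallness parameters: $\eta$ must be small enough that the perturbed SSC from Theorem~\ref{ThmPerbSSC} is available at every possible $\wu$ on the segment from $u_e$ to $u$, that the $L^1$-$L^2$ trick absorbs the cross and $w^2$ mismatches into $\tfrac{\tau}{2}\|w\|_{L^1(\Omega)}$, and that $|\varphi_{u_e,e}|>\tau/2$ on $\{|\varphi_{u_e}|>\tau\}$. Reproducing the exact coefficient $3/16$ requires an absorption step parallel to~\eqref{ChseVarep} in the proof of Theorem~\ref{ThmSSC}, which is routine in spirit but requires a careful tracking of constants.
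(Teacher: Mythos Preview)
Your proposal is correct and follows essentially the same approach as the paper: invoke Theorem~\ref{ThmPerbSSC} with $u_e$ in place of $\bar u$, perform a second-order Taylor expansion of $\mJ(\cdot,e)$, use the KKT condition~\eqref{PertbVarIneq} to write the first-order term as $\int_\Omega|\varphi_{u_e,e}|\,|u-u_e|\,dx$, split $u-u_e=v+w$ according to $\{|\varphi_{u_e}|\leq\tau\}$, compare $\varphi_{u_e,e}$ with $\varphi_{u_e}$ via~\eqref{EstPSlVphi} and Lemma~\ref{Lem25CaEx}, apply the $L^1$--$L^2$ trick from Lemma~\ref{Lem26CaEx}, and absorb the cross and $w^2$ terms by Young's inequality. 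The only cosmetic difference is that the paper records the lower bound on $|\varphi_{u_e,e}|$ on the support of $w$ as $\widehat\tau:=\tau-c_J\|e_J\|_{L^2(\Omega)}-C_1\|e_y\|_{L^2(\Omega)}$ rather than $\tau/2$, but this amounts to the same smallness condition on $\|e\|_E$.
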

\begin{proof}
Let us define the function $F_e:\Omega\times\mathcal{U}_{ad}\to
L^\infty(\Omega)$ by setting
$$F_e(x,u)=\frac{\partial^2L}{\partial y^2}(x,y_{u+e_y})
  -\varphi_{u+e_y}\frac{\partial^2f}{\partial y^2}(x,y_{u+e_y}).$$
Then, for every $e$ small enough, $F_e$ is well-defined due to the
assumptions on $f$ and $L$, and
$$\|y_{u+e_y}\|_{L^\infty(\Omega)}+\|\varphi_{u+e_y}\|_{L^\infty(\Omega)}\leq M,\ \forall u\in\mathcal{U}_{ad},$$
for some $M>0$. We also have
$$\|F_e(x,u)\|_\infty\leq K_M,\ \forall u\in\mathcal{U}_{ad}.$$
For every $u\in\mathcal{U}_{ad}$ and $v,w\in L^2(\Omega)$, it holds
that
$$\begin{aligned}
    |\mJ''_u(u,e)(v,w)|
    &\leq|J''(u+e_y)(v,w)|+\big|\big(e_J,G''(u+e_y)(v,w)\big)_{L^2(\Omega)}\big|\\
    &=\left|\int_\Omega F\big(x,(u+e_y)(x)\big)z_{u+e_y,v}z_{u+e_y,w}dx\right|
      +\|e_J\|_{L^2(\Omega)}\|G''(u+e_y)(v,w)\|_{L^2(\Omega)}\\
    &\leq\left|\int_\Omega F_e(x,u(x))z^e_{u,v}z^e_{u,w}dx\right|
      +\|e_J\|_{L^2(\Omega)}CC_{f,M}\|z^e_{u,v}\|_{L^2(\Omega)}\|z^e_{u,w}\|_{L^2(\Omega)}\\
    &\leq\big(K_M+\|e_J\|_{L^2(\Omega)}CC_{f,M}\big)\|z^e_{u,v}\|_{L^2(\Omega)}\|z^e_{u,w}\|_{L^2(\Omega)}.
\end{aligned}$$
By setting $K_{M,e}:=K_M+\|e_J\|_{L^2(\Omega)}CC_{f,M}$, we obtain
$$|\mJ''_u(u,e)(v,w)|\leq K_{M,e}\|z^e_{u,v}\|_{L^2(\Omega)}\|z^e_{u,w}\|_{L^2(\Omega)},\
  \forall u\in\mathcal{U}_{ad},\forall v,w\in L^2(\Omega).$$

Using the assumptions of the theorem and applying
Theorem~\ref{ThmPerbSSC} we can find $\widehat{\delta}>0$ and
$\rho_1>0$ such that when $e\in E$ is small enough and
$\|u+e_y-u_e\|_{L^2(\Omega)}\leq\rho_1$, we have
\begin{equation}\label{PerbSOCdUe}
    \mJ''_u(u,e)v^2\geq\widehat{\delta}\|z^e_{u,v}\|^2_{L^2(\Omega)},\
    \forall v\in C^\tau_{u_e}.
\end{equation}
Let us fix $\eta$ with $0<\eta<\rho_1$ such that one can find a
constant $\widehat{\tau}>0$ satisfying the condition for all
$u\in\mathcal{U}_{ad}\cap\oB^2_\eta(u_e)$ that
\begin{equation}\label{DKeta}
    \tau-c_J\|e_J\|_{L^2(\Omega)}-C_1\|e_y\|_{L^2(\Omega)}\geq\widehat{\tau}>0,\ \mbox{for all small}\ \|e\|_E,
\end{equation}
and
\begin{equation}\label{DKwdelta}
    \frac{\widehat{\tau}}{C_3^2\eta\sqrt{|\Omega|}}
    -\frac{K_{M,e}}{2}-\frac{2K^2_{M,e}}{\widehat{\delta}}\geq\frac{3\widehat{\delta}}{8},
\end{equation}
where $C_1$, $c_J$, and $\widehat{\delta}$ are respectively given in
Lemma~\ref{Lem25CaEx}, \eqref{EstPSlVphi}, and
Theorem~\ref{ThmPerbSSC}.

When $u\in\mathcal{U}_{ad}\cap\oB^2_\eta(u_e)$, we define
$$v(x)=\begin{cases}
          u(x)-u_e(x),      &\mbox{if}\ |\varphi_{u_e}(x)|\leq\tau\\
          0,                &\mbox{otherwise}
       \end{cases}
  \qquad\mbox{and}\quad w=(u-u_e)-v.$$
We see that $v\in C^\tau_{u_e}$. Using a Taylor expansion of second
order for $\mJ(\cdot,e)$, we have
$$\begin{aligned}
    \mJ(u,e)
    &=J(u+e_y)+(e_J,y_{u+e_y})_{L^2(\Omega)}\\
    &=J(u_e+e_y)+(e_J,y_{u_e+e_y})_{L^2(\Omega)}\\
    &\quad+J'(u_e+e_y)(u-u_e)+\big(e_J,G'(u_e+e_y)(u-u_e)\big)_{L^2(\Omega)}\\
    &\quad+\frac{1}{2}J''(\hu)(u-u_e)^2+\frac{1}{2}\big(e_J,G''(\hu)(u-u_e)^2\big)_{L^2(\Omega)}\\
    &=\mJ(u_e,e)+\mJ'_u(u_e,e)(u-u_e)
      +\frac{1}{2}J''(\hu)(u-u_e)^2+\frac{1}{2}\big(e_J,G''(\hu)(u-u_e)^2\big)_{L^2(\Omega)}\\
    &=\mJ(u_e,e)+\int_\Omega\varphi_{u_e,e}(u-u_e)dx
      +\frac{1}{2}J''(\hu)(v+w)^2+\frac{1}{2}\big(e_J,G''(\hu)(v+w)^2\big)_{L^2(\Omega)},
\end{aligned}$$
where $u-u_e=v+w$ and $\hu=u_e+e_y+\theta(u-u_e)$ for some
$\theta\in(0,1)$. By \eqref{PertbVarIneq}, we have
$$\begin{aligned}
    \mJ(u,e)
    &=\mJ(u_e,e)+\int_\Omega|\varphi_{u_e,e}||v+w|dx
     +\frac{1}{2}\Big(J''(\hu)v^2+\big(e_J,G''(\hu)v^2\big)_{L^2(\Omega)}\Big)\\
    &\quad+\frac{1}{2}\Big(J''(\hu)w^2+\big(e_J,G''(\hu)w^2\big)_{L^2(\Omega)}\Big)
     +J''(\hu)(v,w)+\big(e_J,G''(\hu)(v,w)\big)_{L^2(\Omega)}\\
    &\geq\mJ(u_e,e)+\int_\Omega|\varphi_{u_e,e}||w|dx
     +\frac{1}{2}\mJ''_u(\wu,e)v^2
     +\frac{1}{2}\mJ''_u(\wu,e)w^2+\mJ''_u(\wu,e)(v,w),
\end{aligned}$$
where $\wu=\hu-e_y=u_e+\theta(u-u_e)\in\mathcal{U}_{ad}$.

Note that $\|\wu+e_y-u_e\|\leq\rho_1$ for $e$ small enough. Thus, by
\eqref{PerbSOCdUe}, we have
$$\mJ''_u(\wu,e)v^2\geq\widehat{\delta}\|z^e_{\wu,v}\|^2_{L^2(\Omega)}.$$
Now, from \eqref{EstPSlVphi} and \eqref{Cas25Ext} we deduce that
$$\begin{aligned}
    \|\varphi_{u_e,e}-\varphi_{u_e}\|_{L^\infty(\Omega)}
    &\leq\|\varphi_{u_e,e}-\varphi_{u_e+e_y}\|_{L^\infty(\Omega)}
         +\|\varphi_{u_e+e_y}-\varphi_{u_e}\|_{L^\infty(\Omega)}\\
    &\leq c_J\|e_J\|_{L^2(\Omega)}+C_1\|e_y\|_{L^2(\Omega)}.
\end{aligned}$$
This implies that
$$|\varphi_{u_e}(x)|-|\varphi_{u_e,e}(x)|\leq\|\varphi_{u_e,e}-\varphi_{u_e}\|_{L^\infty(\Omega)}
  \leq c_J\|e_J\|_{L^2(\Omega)}+C_1\|e_y\|_{L^2(\Omega)},$$
and thus
$$|\varphi_{u_e,e}(x)|\geq|\varphi_{u_e}(x)|-c_J\|e_J\|_{L^2(\Omega)}-C_1\|e_y\|_{L^2(\Omega)}.$$
From this and \eqref{DKeta} we obtain
$$\begin{aligned}
    \int_\Omega|\varphi_{u_e,e}||w|dx
    &\geq\int_\Omega|\varphi_{u_e}||w|dx
     -\int_\Omega\big(c_J\|e_J\|_{L^2(\Omega)}+C_1\|e_y\|_{L^2(\Omega)}\big)|w|dx\\
    &\geq\tau\|w\|_{L^1(\Omega)}
     -\big(c_J\|e_J\|_{L^2(\Omega)}+C_1\|e_y\|_{L^2(\Omega)}\big)\|w\|_{L^1(\Omega)}\\
    &=\big(\tau-c_J\|e_J\|_{L^2(\Omega)}-C_1\|e_y\|_{L^2(\Omega)}\big)\|w\|_{L^1(\Omega)}\\
    &\geq\widehat{\tau}\|w\|_{L^1(\Omega)}.
\end{aligned}$$
By Schwarz's inequality we get
$$\|w\|_{L^1(\Omega)}\leq\|w\|_{L^2(\Omega)}\sqrt{|\Omega|}
  \leq\|u-u_e\|_{L^2(\Omega)}\sqrt{|\Omega|}\leq\eta\sqrt{|\Omega|}.$$
Consequently, by Lemma~\ref{Lem26CaEx} we have
$$\frac{\widehat{\tau}}{C_3^2\eta\sqrt{|\Omega|}}\|z^e_{\wu,w}\|^2_{L^2(\Omega)}
  \leq\frac{\widehat{\tau}}{\eta\sqrt{|\Omega|}}\|w\|^2_{L^1(\Omega)}\leq\widehat{\tau}\|w\|_{L^1(\Omega)}.$$

Summarizing the above estimates and using Young's inequality, we
deduce that
$$\begin{aligned}
    \mJ(u,e)
    &\geq\mJ(u_e,e)+\widehat{\tau}\|w\|_{L^1(\Omega)}
     +\frac{\widehat{\delta}}{2}\|z^e_{\wu,v}\|^2_{L^2(\Omega)}\\
    &\quad-\frac{K_{M,e}}{2}\|z^e_{\wu,w}\|^2_{L^2(\Omega)}
     -K_{M,e}\|z^e_{\wu,v}\|_{L^2(\Omega)}\|z^e_{\wu,w}\|_{L^2(\Omega)}\\
    &\geq\mJ(u_e,e)+\frac{\widehat{\tau}}{C_3^2\eta\sqrt{|\Omega|}}\|z^e_{\wu,w}\|^2_{L^2(\Omega)}
     +\frac{\widehat{\delta}}{2}\|z^e_{\wu,v}\|^2_{L^2(\Omega)}\\
    &\quad-\frac{K_{M,e}}{2}\|z^e_{\wu,w}\|^2_{L^2(\Omega)}
     -\frac{\widehat{\delta}}{8}\|z^e_{\wu,v}\|^2_{L^2(\Omega)}
     -\frac{2K^2_{M,e}}{\widehat{\delta}}\|z^e_{\wu,w}\|^2_{L^2(\Omega)}\\
    &=\mJ(u_e,e)+\frac{3\widehat{\delta}}{8}\|z^e_{\wu,v}\|^2_{L^2(\Omega)}
     +\bigg(\frac{\widehat{\tau}}{C_3^2\eta\sqrt{|\Omega|}}
     -\frac{K_{M,e}}{2}-\frac{2K^2_{M,e}}{\widehat{\delta}}\bigg)\|z^e_{\wu,w}\|^2_{L^2(\Omega)}.
\end{aligned}$$
By the choice of small $\eta>0$ satisfying
condition~\eqref{DKwdelta}, we obtain
\begin{equation}\label{KKTLocOpt}
\begin{aligned}
    \mJ(u,e)
    &\geq\mJ(u_e,e)+\frac{3\widehat{\delta}}{8}\|z^e_{\wu,v}\|^2_{L^2(\Omega)}
     +\frac{3\widehat{\delta}}{8}\|z^e_{\wu,w}\|^2_{L^2(\Omega)}\\
    &\geq\mJ(u_e,e)+\frac{3\widehat{\delta}}{16}\|z^e_{\wu,v}+z^e_{\wu,w}\|^2_{L^2(\Omega)}\\
    &=\mJ(u_e,e)+\frac{3\widehat{\delta}}{16}\|z^e_{\wu,u-u_e}\|^2_{L^2(\Omega)},
\end{aligned}
\end{equation}
which yields \eqref{GrSffCod}. $\hfill\Box$
\end{proof}

\subsection*{Conclusion}

We have studied perturbed bang-bang controls problems and obtained
H\"older stability results in $L^1(\Omega)$.
In this paper, the perturbations act linear in the state and adjoint equations.
Based on this work, one can discuss nonlinear perturbations without any additional difficulties.
Let us mention, that it is an open problem to prove necessary results for the H\"older stability of
bang-bang controls as for instance the results of \cite{MorNgia14SIOPT} do not apply directly.

\end{document}